\newcommand{\hh}{h}
\newcommand{\dDG}{d} 
\newcommand{\ddg}{\dDG} 
\newcommand{\df}{k} 
\newcommand{\nc}{r} 
\newcommand{\drepro}{\nc} 
\newcommand{\idxfst}{0}
\newcommand{\idxlst}{{j_\nc}}
\newcommand{\nkft}{n} 
\newcommand{\lst}{\nkft} %
\newcommand{\sft}{j}
\newcommand{\nkdg}{m} 
\newcommand{\lstDG}{\nkdg}
\newcommand{\ffn}{f} 
\newcommand{\flt}[1]{\ffn_{#1}}
\newcommand{\cf}[2]{\ffn_{#1;#2}}
\newcommand{\cfno}[1]{\ffn_{#1}}
\newcommand{\lft}{a} 
\newcommand{\rgt}{b} 
\newcommand{\itmsym}{$\triangleright$} 
\newcommand{\idxft }{\scalebox{.7}{\ensuremath{\mathcal{J}}}} 
\newcommand{\ft}{filter}
\newcommand{\filter}{kernel}
\newcommand{\orderft}{reproduction degree}
\newcommand{\mult}{\nu}
\newcommand{\refx}{\lambda}
\newcommand{\ipmat}{G} 
\newcommand{\ipmatl}[1]{\ipmat_{\refx_{#1}}} 
\newcommand{\kft}{t}
\newcommand{\xx}{x}
\newcommand{\bkft}{\mathbf{\kft}}
\newcommand{\knotsftv}{\boldsymbol \kft}   
\newcommand{\tr}{\mathtt{t}}
\newcommand{\oldMethod}{numerical approach}
\newcommand{\newMethod}{symbolic approach}
\definecolor{dblue}{rgb}{0,.3,.6}
\definecolor{dgreen}{rgb}{0,.4,0}
\newcommand{\finaltime}{\widetilde{\tau}}
\newcommand{\alphab}{\boldsymbol\alpha}
\newcommand{\ix}{\omega}
\newcommand{\midx}{{\boldsymbol\omega}}
\newcommand{\tm}{\tau} 
\newcommand{\tmend}{\finaltime} 
\newcommand{\tb}{\mathbf{t}}
\newcommand{\ab}{\mathbf{a}}
\newcommand{\bb}{\mathbf{b}}
\newcommand{\lb}{\boldsymbol{\ell}}
\newcommand{\sk}{f} 
\newcommand{\drm}{\mathrm{d}} 
\newcommand{\Bsp}[2]{B(#1\,|\,#2)}
\newcommand{\Nbfi}[1]{\phi_{#1}}
\newcommand{\Nbf}[3]{\Nbfi{#1}(#2\,;\,#3)}
\newcommand{\Nsp}[2]{\Nbf{}{#1}{#2}}
\newcommand{\ub}{\mathbf{u}}
\newcommand{\Ical}{\mathcal{I}}
\newcommand{\diag}{\operatorname{diag}}
\newcommand{\adia}{A}
\newcommand{\kernel}{kernel}
\newcommand{\vb}{\mathbf{v}}
\newcommand{\figref}[1]{Fig.~\ref{#1}}
\newcommand{\secref}[1]{Section~\ref{#1}}
\newcommand{\thmref}[1]{Theorem~\ref{#1}}
\newcommand{\propref}[1]{Proposition~\ref{#1}}
\newcommand{\lemref}[1]{Lemma~\ref{#1}}
\newcommand{\equaref}[1]{Eq.~\eqref{#1}}
\newcommand{\defref}[1]{Definition \ref{#1}}
\newcommand{\mysecondproof}[1]{}
\newcommand{\bsp}{B} 
\newcommand{\dg}{\delta}
\newcommand{\DG}{DG}
\newcommand{\convol}{\ast}
\newtheorem{corollary}{Corollary}[section]
\newtheorem{theorem}{Theorem}[section]
\newtheorem{lemma}{Lemma}[section]
\newtheorem{prop}{Proposition}[section]
\newtheorem{definition}{Definition}[section]
\theoremstyle{remark}
\newtheorem{example}{Example}[section]
\newcommand{\dgout}{u}
\newcommand{\itoj}{i:j}
\newcommand{\gs}{s} 
\newcommand{\tsft}{\kft_{\knts}}  
\newcommand{\divdif}{\mathord
   {\kern.43em\vrule width.6pt height5.6pt depth-.28pt\kern-.43em\Delta}}
\newcommand{\mc}{\mathbf{\cfno{}}}
\newcommand{\R}{\mathbb{R}}
\newcommand{\siac}{SIAC}
\newcommand{\bsiac}{PSIAC}
\newcommand{\proto}{prototype}
\newcommand{\matLI}{M_{0,\bkft,\idxft}}
\newcommand{\bp}{\mathbf{p}} 
\newcommand{\Nbs}[2]{N(#1\,|\,#2)}
\title{Non-uniform Discontinuous Galerkin\\ Filters via Shift and Scale}
\author{Dang-Manh Nguyen and J\"org Peters}
\begin{document}
\maketitle 
\begin{abstract}
Convolving the output of Discontinuous Galerkin (\DG) computations
with symmetric Smoothness-Increasing Accuracy-Conserving
(SIAC) filters can improve both smoothness and accuracy. 
To extend convolution to the boundaries, several
one-sided spline 
filters have recently been developed.
We interpret these filters as instances of a general class
of position-dependent spline filters that we abbreviate as \bsiac\ filters. 
These filters may have a non-uniform knot sequence
and may leave out some B-splines of the sequence.
\par 
For general position-dependent filters, we  prove that rational knot sequences 
result in rational filter coefficients.
We derive symbolic expressions for \proto\ 
knot sequences, typically integer sequences that may include
repeated entries and corresponding B-splines, some of which may be skipped. 
Filters for shifted or scaled knot sequences
are easily derived from these prototype filters so that 
a single filter can be re-used in different locations and 
at different scales.
Moreover, the convolution itself reduces to executing a single dot product
making it more stable and efficient than the existing approaches 
based on numerical integration.
The construction is demonstrated for several established
and one new boundary filter.
\end{abstract}


\section{Introduction}\label{sec:intro}

Since the output of Discrete Galerkin (\DG) computations often captures higher
order moments of the true solution \cite{moment78}, postprocessing DG output
by convolution can improve both smoothness and accuracy 
\cite{Bramble77,siac02hyperbolic}. 
In the interior of the domain of computation, symmetric
smoothness increasing accuracy conserving (SIAC) filters 
have been demonstrated to provide optimal accuracy \cite{siac02hyperbolic}.
However the symmetric footprint precludes using these filters
near boundaries of the computational domain. 
 
To address this problem Ryan and Shu \cite{siac2003}
pioneered the use of {one-sided spline filters}. 
The Ryan-Shu filters improve the $L^2$ error,  but not necessarily the   
point-wise errors.
In fact, the filters are observed to increase the pointwise error  
near the boundary, motivating the design of the SRV filter \cite{siac2011},
a filter \kernel\ of increased support.
Due to near-singular calculations, a stable numerical
derivation of the SRV filter requires quadruple precision.
Indeed, the coefficients of the boundary filters 
\cite{siac2003,siac2011,siac2012,siac2014,siac15unifiedView} 
are computed by inverting a matrix whose entries are computed by
Gaussian quadrature; and, as pointed out in \cite{siac2014}, 
SRV filter matrices are close to singular. 
\cite{siac2014} therefore introduced the RLKV filter, 
that augments the filter of \cite{siac2003} by a single additional B-spline.
This improves stability and retains the support-size.
However, RLKV filter errors on canonical test problems are higher 
than those of symmetric filters and they have 
sub-optimal $L^2$ and $L^\infty$ superconvergence rates \cite{siac2014},
as well as poorer derivative approximation \cite{siac16der} than
SRV filters. Consequently, SRV filters 
merit attention if their stablity can be improved. 

The contribution of this paper is to reinterpret the published one-sided
filters in the framework of position-dependent
spline filters with general knot sequences \cite{JPetersFcoefs}
(that were inspired by the partial
symbolic expression of coefficients for uniform 
knot sequences in \cite{siac15unifiedView}).
This reinterpretation allows expressing and pre-solving them in sympbolic
form, preempting instability so these filters can reach their full potential.
Specifically, this paper 
\begin{itemize}
\item[\itmsym]
proves general properties of SIAC filters, when their knots 
are shifted or scaled;
\item[\itmsym]
uses the properties to express the \filter s in a factored, semi-explicit 
form that becomes explicit for given knot patterns and yields
rational coefficients for rational knot sequences; 
\item[\itmsym]
characterizes a class of position-dependent SIAC filters,
short \bsiac\ filters, based on splines with general knot sequences;
the coefficients of \bsiac\ filters are polynomial expressions in the 
position;
\item[\itmsym]
shows that SRV and RLKV 
\cite{siac2003,siac2011,siac2012,siac2014,siac15unifiedView} 
are \bsiac\ filters;
and
\item[\itmsym]
illustrates the general framework by comparing the established numerical
approach with
the new symbolic filter derivation and by adding
a new effective, stably-computed filter
of lower degree than SRV or RLKV.
\end{itemize}
\def\figw{.31\linewidth} 
\begin{figure}[h] \footnotesize
 \centering
 \begin{tabular}{ccc} 
  \includegraphics[width=\figw]{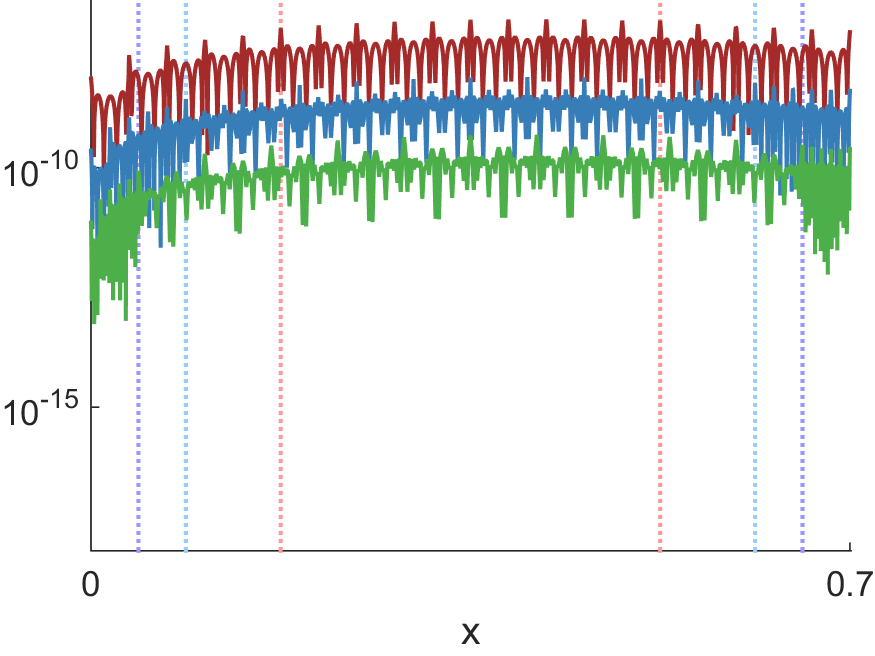}
  &
  \includegraphics[width=\figw]{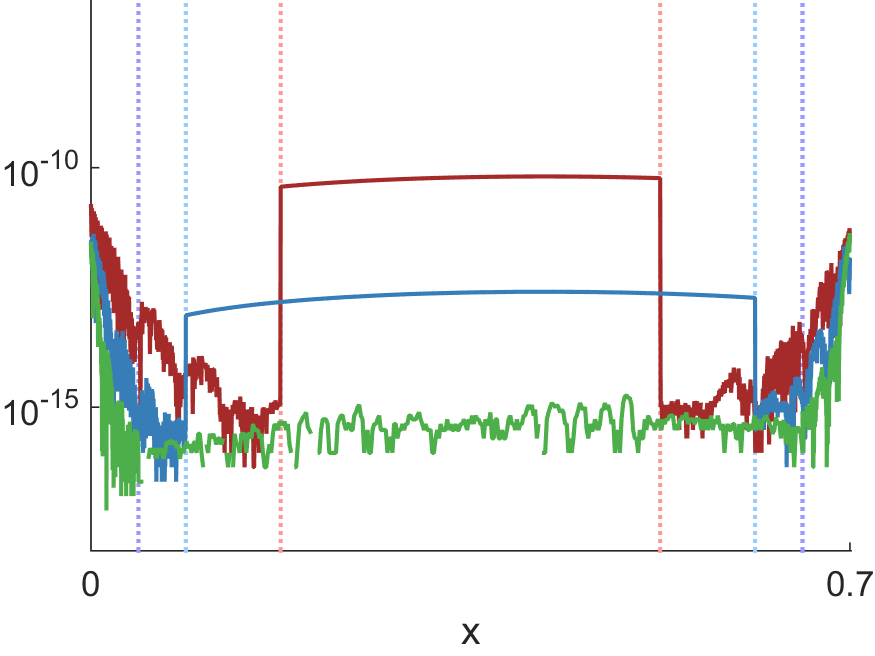}
  &
  \includegraphics[width=\figw]{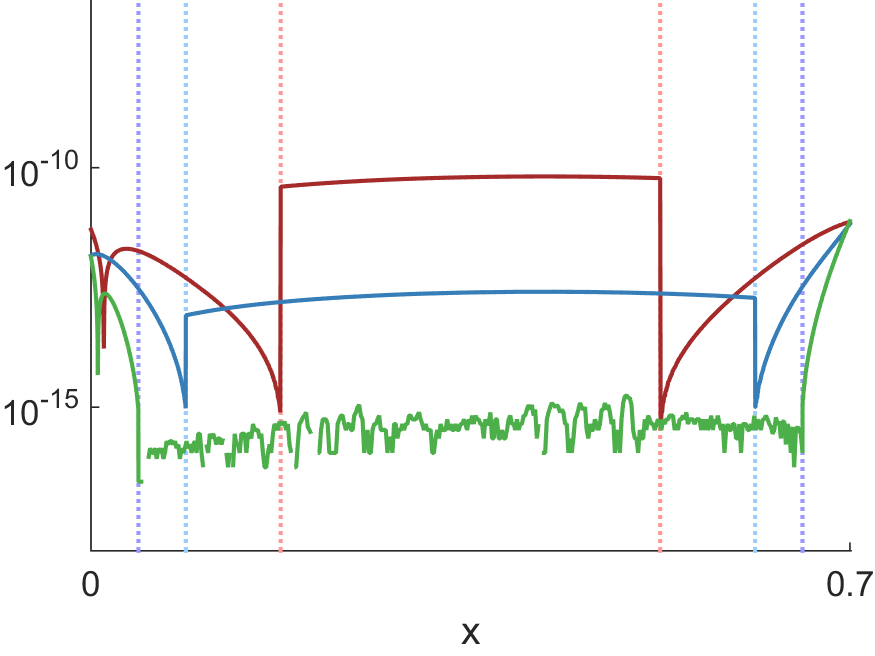}
  \\
(a) \DG\ output error & (b) SRV \oldMethod & (c) SRV \newMethod
\\
  &
  \includegraphics[width=\figw]{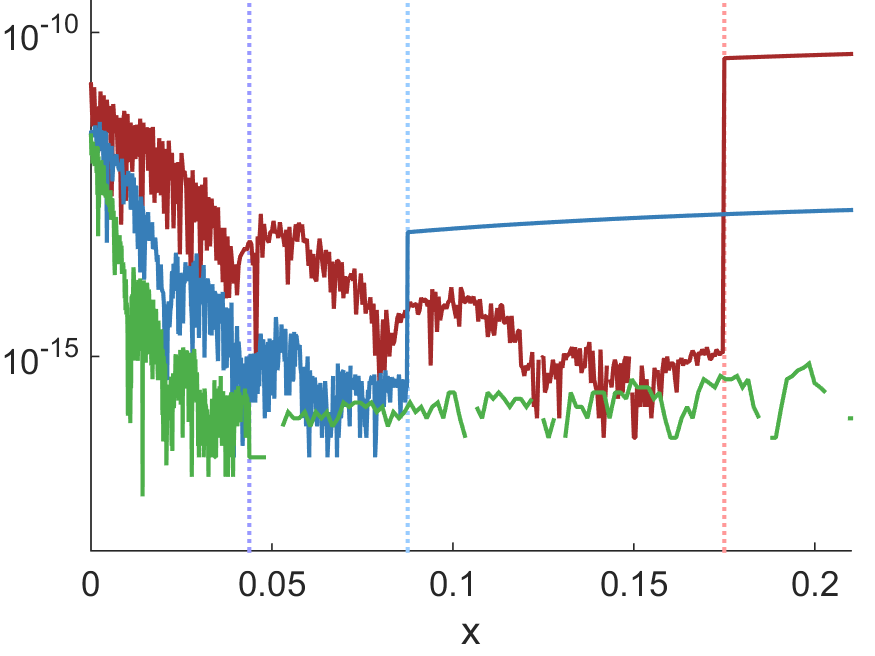}
  &
  \includegraphics[width=\figw]{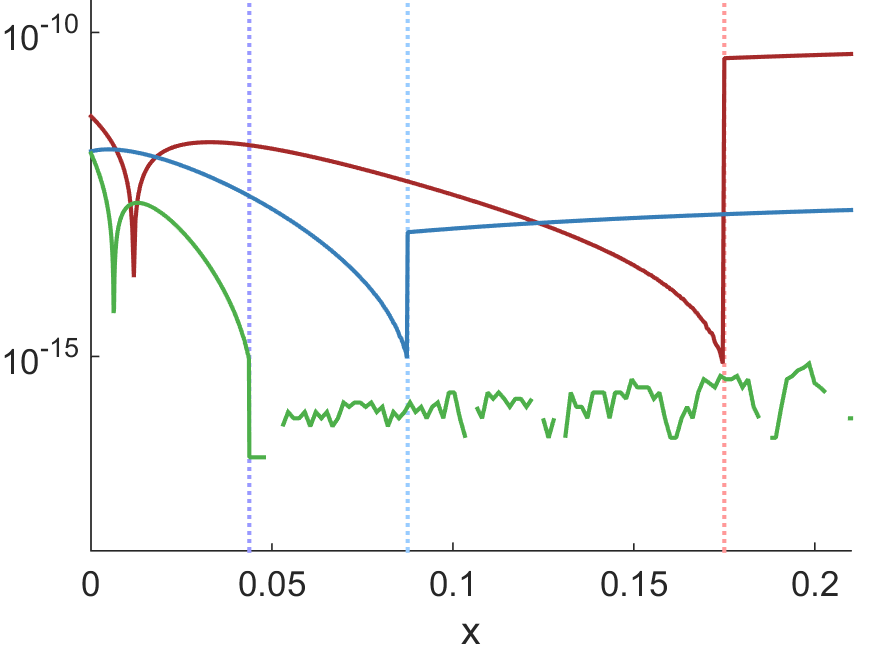}
  \\ 
  &(b') Left zoom of (b) & (c') Left zoom of (c)
 \end{tabular}
\caption{ 
(a) Point-wise error of the $L^2$-projection of 
$\dgout(x,0) = \frac{7}{10}\sin(\pi \sqrt{\frac{10}{7}} x)$
(interpreted as a \DG\ approximation at time $\tm=0$)
onto the space of piecewise \emph{cubic} Bernstein-B\'{e}zier polynomials.
The graphs in each figure correspond, from top to bottom,
to refined intervals separated by $\nkdg=20,40,80$ break points.
(b,c) Point-wise error after double precision convolution with
the filter of \cite{siac2011} computed      
(b,b') based on the \oldMethod\ {\it vs.} 
(c,c') as \bsiac\ filter using the \newMethod. 
The vertical dotted lines separate the boundary region where 
position-dependent one-sided filters are applied from the interior 
where symmetric filters are applied.
}
\label{fig:teasing}
\end{figure} 
The new characterization allows,
in standard double precision, to replace 
the current three-step \emph{\oldMethod} of approximate computation of the matrix,
its inversion and application to the data by Gauss quadrature, 
by a single-step \emph{\newMethod} 
derived in \thmref{thm:symFilteredDG}.
Fig.~\ref{fig:teasing} contrasts, for standard double precision,
the noisy error of numerical SRV filtering
with the error of the new symbolic formulation.

Not only is the \newMethod\ more stable, but  
\begin{itemize}
\item[\itmsym]
scaled and shifted version of 
the filter are easily obtained from one symbolic prototype filter;
\item[\itmsym]
computation is more efficient:
filtering the \DG\ output reduces to a single dot product of
two vectors of small size;  
\item[\itmsym]
stabilizes and simplifies computing  
approximate derivatives of the filtered DG output \cite{siac05der}
(see also \cite{thomee77der,siac09der,siac16der})  
to differentiating the polynomial representation of the 
filtered output; and
\item[\itmsym] 
proves the smoothness of the so-filtered DG output to be $C^\infty$.
\end{itemize}
 
The last point is of interest since \cite{siac2003,siac2011} 
observed and conjectured that the smoothness of the filtered \DG\ computation
is the same as the smoothness in the domain interior where the
symmetric filter applied. Not only does this conjecture hold true, but
\thmref{thm:symFilteredDG} implies that the smoothness is $C^\infty$.

\paragraph{Organization.}  
\secref{sec:backgound} introduces the canonical test equation,
B-splines, convolution, and a generalization of the formula 
of \cite{JPetersFcoefs}
for convolution with splines based on arbitrary knot sequences.
\secref{sec:overShiftedKnots} reformulates the  generalized
convolution formula and 
derives the convolution coefficients when the
knot sequence is a scaled or shifted copy of a \proto\ sequence.
\secref{sec:filtered}, in particular
\thmref{thm:symFilteredDG}, summarizes the resulting efficient
convolution with \bsiac\ filters based on scaled and shifted
copies of a \proto\ sequence.
\secref{sec:someFilters} shows that SRV and RLKV are \bsiac\ filters
and advertises their true potential by 
comparing their computation using the 
\oldMethod\ to using the more stable \newMethod.
To illustrate the generality of the setup,
a new multiple-knot linear filter is added to the picture.

\section{Background} 
\label{sec:backgound}
Here we establish the notation, distinguishing between filters and \DG\ output,
explain the canonical test problem, the \DG\ method, B-splines
and reproducing filters and we review one-sided and position-dependent
\siac\ filters in the literature.

\subsection{Notation}
Here and in the following, we abbreviate the sequences
\begin{align*}
i:j  \,\, &:=\, \, 
\begin{cases}
   (i,i+1,\ldots,j-1,j), \text{if } i\le j, \\
   (i,i-1,\ldots,j+1,j), \text{if } i > j,
\end{cases}
\quad
   s_{i:j} := (s_i,\ldots,s_j).
\end{align*}
We denote by $f\convol g$ the convolution of a function $f$ with
a function $g$, i.e.\ 
\begin{align*}
   (f\convol g)(x) := \int_\R f(t) \, g (x-t) \, \drm t
   = (g\convol f)(x),
\end{align*}
for every $x$ where the integral exists
and we use the terms \emph{filter}, \emph{filter kernel} and \emph{kernel}
interchangeably, even though, strictly speaking, filtering means
convolving a function with a kernel.
We reserve the following symbols:
\begin{align*}
\begin{tabular}{l l}
$\dDG$ & degree of the \DG\ output; \\
$\nkdg$ & number of intervals of the \DG\ output; 
\\
$s_{0:\nkdg}$ & \emph{prototype} increasing break point sequence, 
typically integers;
\\
& the break sequence of the \DG\ output is $\hh  s_{0:\nkdg}$;\\
$\df$ & degree of the filter kernel;  \\
$\nc+1$ & number of filter coefficients  \\
& for reproduction of polynomials up to degree $\nc$;
\\
$\idxft := (\idxfst,\ldots,\idxlst)$ & index sequence;\\
&
if the B-splines of the filter are consecutive, then $\idxlst = \nc$;
\\
$\nkft$& number of knot intervals spanned by the filter;\\
&
$\nkft = \idxlst+\df+1$; 
\\
$\bkft := \kft_{0:\nkft}$ & prototype (integer) knot 
sequence of the filter;\\
        & the input knot sequence of the filter is $\hh \kft_{0:\nkft} + \xi$
        \\
        &where $\xi$ is the shift and $\hh$ scales.
\end{tabular}
\end{align*}
The notation is illustrated by the following example.
\begin{example} \label{ex:symbols}
Consider a degree-one spline filter, i.e.~$k=1$, defined over 
the knot sequence $\bkft := 0:6$ and associated with the index set 
$\idxft := \{0,3,4\}$. 
That is, the two B-splines defined over the knot squences $1:3$ and $2:4$ are 
omitted, $\nkft=6$, $r=2$ and $\idxlst=4$.
A linear ($d=1$) \DG\ output on
200 uniform segments
of the interval $[-1..1]$ implies
$\nkdg=200$, $h = \frac{1}{100}$ and $s_{0:\nkdg} = -100:100$.
\end{example}


\subsection{The canonical test problem, the Discontinuous Galerkin method
and B-splines}
To demonstrate the performance of the filters on a concrete example,
\cite{siac2003} used the following univariate hyperbolic 
partial differential wave equation:
 \begin{align} 
  \frac{du}{d\tm} +
  \frac{d}{dx}\Big( \kappa(x,\tm) \, u\Big) &= \rho(x,\tm), &x \in (a,b), 
  \tm \in (0,\tmend)
  \label{eq:hypEqs}
  \\
  u(x,0) &= u_0(x), &x\in
[a,b] \notag
 \end{align} 
subject to periodic boundary conditions, $u(a,\tm) = u(b,\tm)$, or Dirichlet
boundary conditions $u(e,\tm) = u_0(\tm)$ where, depending on the sign of
$\kappa(x,\tm)$, $e$ is either $a$ or $b$.
Subsequent work 
\cite{siac2003,siac2011,siac2014}
adopted the same differential equation to test their new one-sided filters 
and to compare to the earlier work. Eq.~\eqref{eq:hypEqs} is therefore
considered the \emph{canonical test problem}. 
We note, however, that \siac\ filters apply more widely, for example
to FEM and elliptic equations \cite{Bramble77}.
\par 
In the \DG\ method,
the domain $[a..b]$ is partitioned into intervals by a sequence
$\hh s_{0:\nkdg}$ of break points $a=:\hh s_0,\ldots,\hh s_\nkdg:=b$.
Assuming that the sequence is rational, introducing $\hh$ will allow
us later to consider the prototype sequence $s_{0:\nkdg}$ of integers.
The \DG\ method approximates the time-dependent solution
to Eq.~\eqref{eq:hypEqs} by
\begin{equation} \label{eq:DGoutput} 
   \dgout(x,\tm)  
   :=
   \sum_{i=0}^m \, \dgout_i(\tm) \, \Nbf{i}{x}{\hh s_{0:\nkdg}},
\end{equation}
where the scalar-valued functions $\Nbf{i}{.}{\hh s_{0:\nkdg}}$,
$0 \leq i \leq m$, are linearly independent and satisfy the scaling relations
\begin{equation} \label{eq:DGbasisfuns}
 \Nbf{i}{hx}{hs_{0:\nkdg}} = \Nbf{i}{x}{s_{0:\nkdg}}.
\end{equation}
Examples of functions $\Nbfi{i}$ 
are non-uniform B-splines \cite{deboor}, 
Chebysev polynomials and Lagrange polynomials.
Relation \eqref{eq:DGbasisfuns} is typically used 
for refinement 
in 
FEM, DG or Iso-geometric PDE solvers.

Setting $v(x) := \Nbf{j}{x}{\hh s_{0:\nkdg}}$,
the weak form of Eq.~\eqref{eq:hypEqs}
for $j=0,1,\ldots,m$,
\begin{equation}
 \int_a^b  \big( \frac{du}{d\tm} \, v - \kappa(x,\tm) u \, \frac{dv}{dx} \big)
 \, \drm x = 
 \int_a^b \rho(x,\tm)\,v\,\drm x -
 \Big(\kappa(x,\tm)u(x,\tm)v(x)\Big)\Big|_{x=a}^{x=b},
\end{equation} 
forms a system of ordinary differential equations with 
the coefficients $u_i(\tm)$, $0\leq i \leq m$, as unknowns.

\newcommand{\ddfun}{g}
The goal of \emph{\siac\ filtering} is to smooth $\dgout(\xx,\tm)$ in $\xx$ 
by convolution in $\xx$ with a linear combination of B-splines. 
Typically filtering is applied after the last time step when $\tm = \tmend$.
Following \cite{JPetersFcoefs}, we characterize B-splines in terms
of divided differences
\cite{Curry66}. For a sufficiently smooth univariate real-valued function 
$\ddfun$ with $k$th derivative $\ddfun^{(k)}$, divided differences are defined by
\begin{align}
   \divdif(t_i) \ddfun &:= \ddfun(t_i),
   \hskip0.25\linewidth 
   \text{ and for }
   j > i 
   \notag
   \\
   \divdif(t_{i:j}) \ddfun &:= 
   \begin{cases} 
      (\divdif(t_{i+1:j}) \ddfun - \divdif(t_{i:j-1}) \ddfun)/ (t_j-t_i), & 
      \text{ if } t_i\ne t_j,\\
      \frac{1}{(j-i)!} \, \ddfun^{(j-i)}(t_i), &
      \text{ if } t_i = t_j.
   \end{cases}
    \label{eq:divdif}
\end{align}
If $t_{\itoj}$ is a non-decreasing sequence, we call its elements
$t_\ell$ \emph{knots} and
the classical definition
of the \emph{B-spline of degree $\df$
with knot sequence $t_{\itoj}, \, j:=i+\df+1$} is
\begin{align}
   \bsp(x|t_{\itoj}) &:= (t_{j}-t_i)\ \divdif(t_{\itoj})
   (\max\{(\cdot-x),0\})^{\df}.
   \label{eq:defa}
\end{align}
Here $\divdif(t_{\itoj})$ acts on the function 
$\ddfun: t \to (\max\{(t-x),0\})^{\df}$ for a given $x \in \R$.
Consequently, a B-spline is a non-negative piecewise polynomial function in $x$
with support on the interval $[t_i..t_j)$.
If $\mult$ is the multiplicity of the number $t_\ell$ in the sequence $t_{\itoj}$,
then $\bsp(x|t_{\itoj})$ is at least $\df-\mult$ times continuously differentiable
at $t_\ell$.
This definition of $\Bsp{t}{t_{i:i+\df+1}}$
agrees, after scaling,
with the definition $\Nbs{t}{t_{i:i+\df+1}}$ of the B-spline by recurrence
\cite{Boor:2002:BB}:
\begin{equation} 
  \Nbs{t}{t_{i:i+\df+1}} = \frac{t_{i+\df+1}-t_i}{\df+1}\, 
  \Bsp{t}{t_{i:i+\df+1}}.
\end{equation}

\subsection{\siac\ filter \kernel\ coefficients}
\label{subsec:defM0}

A piecewise polynomial $\sk: \R\to\R$ is said to 
be a \emph{\siac\ spline \kernel\ of \orderft\ $\nc$}
if convolution of $\sk$ with monomials reproduces the monomials up
to degree $\nc$, i.e., if 
\begin{align}
   (\sk \convol (\cdot)^\dg) (x) = x^\dg, \qquad \dg = 0..\nc.
   \label{eq:convol}
\end{align} 

Mirzargar et al. \cite{siac15unifiedView} derived
semi-explicit formulas when the  filter has uniform knots
while \cite{JPetersFcoefs} gives semi-explicit formulas
for the coefficients of spline kernels over general knot sequences. 
The following definition further generalizes these formulas by allowing to skip
some B-splines when constructing the \filter.

\begin{definition} [\siac\ spline filter \kernel]
Let $\idxft := (\idxfst,\ldots,\idxlst)$ be a sequence of 
strictly increasing integers between $0$ and $\idxlst$.
A \emph{\siac\ spline \filter} of degree $\df$ and \orderft{ }$\nc$ 
with index sequence $\idxft$ and knot sequence $\kft_{0:\nkft}$ 
is a spline
\begin{equation*}
   \sk(x) := \sum_{\sft \in \idxft}  \cfno{\sft} B(x|\tsft),
\end{equation*}
of degree $\df$ with coefficients $\cfno{\sft}$ chosen so that
\begin{align}
   &\Bigl( \sum_{\sft\in\idxft}  \cfno{\sft} B(\cdot|\tsft)
   \convol (-\cdot)^\dg \Bigr) (x)
   =
   (-x)^\dg, \qquad \dg = 0,\ldots,\nc.
   \tag{\ref{eq:convol}'}
\end{align}
\label{def:siackernel} 
\end{definition} 
\noindent
When $\idxft \, = \, 0:\nc$ then \defref{def:siackernel} replicates the
definition of \cite{JPetersFcoefs}. 
\begin{lemma}[\siac\ coefficients]
The vector
$\mc := [\cfno{0},\ldots,\cfno{\nc}]^\tr \in \R^{\nc+1}$ 
of B-spline coefficients of the \siac\ filter with
index sequence $\idxft := (\idxfst,\ldots,\idxlst)$  and
knot sequence $\bkft := \kft_{0:\nkft}$ is 
 \begin{align}
    \mc := \text{first column of }
    M_{0,\bkft,\idxft}^{-1},
    \qquad  
   M_{0,\bkft,\idxft} :=
    \left[ \begin{matrix}
    \divdif{\tsft} \xx^{\df+1+\dg} 
    \end{matrix} \right]_{\dg=0:\nc, \,\sft\in\idxft}.
    \label{eq:M}
\end{align}
\label{thm:siac}
\end{lemma}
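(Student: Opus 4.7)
The plan is to convert the reproduction identities (\ref{eq:convol}') into a finite linear system for the coefficients $\cfno{\sft}$. Writing $g_\dg(y) := (-y)^\dg$ and using $g_\dg(x-y) = (y-x)^\dg$, each convolution becomes the moment integral $\int_\R B(y|\tsft)\,(y-x)^\dg\,\drm y$. Binomial expansion of $(y-x)^\dg$ exhibits this as a polynomial of degree $\dg$ in $x$ whose coefficients are the B-spline moments $\mu_m(\sft) := \int_\R B(y|\tsft)\,y^m\,\drm y$. Since (\ref{eq:convol}') must hold for every $x$, matching coefficients of like powers of $x$ across all $\dg = 0, \ldots, \nc$ collapses the continuum of identities into the discrete system
\begin{equation*}
\sum_{\sft \in \idxft}\, \cfno{\sft}\,\mu_m(\sft) \;=\; \begin{cases} 1, & m = 0, \\ 0, & 1 \le m \le \nc. \end{cases}
\end{equation*}

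Next I would invoke the classical moment-to-divided-difference identity
\begin{equation*}
\mu_m(\sft) \;=\; \frac{m!\,(\df+1)!}{(\df+1+m)!}\,\divdif(\tsft)\,x^{\df+1+m},
\end{equation*}
which can be obtained either from Peano's kernel theorem applied to the monomial $x^{\df+1+m}$, or directly from the divided-difference definition (\ref{eq:defa}) of the B-spline via $\df+1$ integrations by parts (equivalently, by recognizing the right-hand side as the complete homogeneous symmetric polynomial of degree $m$ in the knots via the Leibniz rule). Rescaling row $m$ of the linear system by the reciprocal factor $(\df+1+m)!/(m!\,(\df+1)!)$ replaces each $\mu_m(\sft)$ with the $(m,\sft)$-entry of $\matLI$ and leaves the right-hand side equal to $1$ in position $m=0$ and to $0$ otherwise. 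The rescaled system therefore reads $\matLI\,\mc = (1,0,\ldots,0)^\tr$, whose solution is the first column of $\matLI^{-1}$, as claimed — provided $\matLI$ is invertible.

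The main obstacle I anticipate is bookkeeping the combinatorial factor in the moment identity against the paper's chosen normalization of $B(x|\tsft)$, since this factor controls whether the rescaled right-hand side is exactly $(1,0,\ldots,0)^\tr$ rather than a nonzero scalar multiple of it. A secondary, conceptually lighter concern is that the index sequence $\idxft$ may skip some B-splines; this merely restricts the linear system to the square submatrix indexed by $\idxft$, with the derivation otherwise unaffected, and invertibility of this restricted matrix is the tacit hypothesis of the lemma.
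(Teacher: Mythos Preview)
Your derivation of the linear system $\matLI\,\mc = (1,0,\ldots,0)^\tr$ from the reproduction constraints is correct and is essentially what the paper defers to the cited reference \cite{JPetersFcoefs}. The moment identity you invoke is exactly Peano's kernel formula, which the paper itself uses elsewhere.

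However, you have misidentified the hard part. You write that invertibility of $\matLI$ is ``the tacit hypothesis of the lemma,'' but in fact the statement asserts invertibility by writing $\matLI^{-1}$, and proving invertibility is the entire content of the paper's proof. The paper opens with ``We prove invertibility of $M_{0,\bkft,\idxft}$. The remaining claims of the lemma then follow as in \cite{JPetersFcoefs}.'' The generalization over the cited reference is precisely that the index sequence $\idxft$ may skip B-splines, and it is not obvious a priori that the resulting square submatrix remains nonsingular. Your proposal does not address this at all.

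For comparison, the paper's argument assumes a left null vector $\ell$ of $\matLI$, forms the polynomial $p(x) = x^{\df+1}\sum_{\dg} \ell_\dg x^\dg$, and observes that $\divdif(\tsft)\,p = 0$ for every $\sft\in\idxft$. Iterated application of Rolle's theorem (tracking interlaced root sequences through successive derivatives) shows that each such vanishing divided difference forces a root of $D^{\df+1}p$, and that shifts of the knot subsequence force distinct roots. This yields $\nc+1$ roots of a degree-$\nc$ polynomial, hence $\ell=0$. You would need to supply an argument of this kind; the linear-system bookkeeping alone does not suffice.
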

\renewcommand{\tsft}{\kft_{\sft:\sft+\kp}}
\newcommand{\ssftm}{\kft^1_{\sft:\sft+\kp-1}}
\newcommand{\ssft}{\kft^1_{\sft:\sft+\kp}}
\newcommand{\tsftp}{\kft_{\sft+1:\sft+\kp+1}}
\newcommand{\ssftp}{\kft^1_{\sft+1:\sft+\kp}}
\newcommand{\kp}{\kappa}
\begin{proof}
We prove invertibility of $M_{0,\bkft,\idxft}$.
The remaining claims of the lemma then follow as in \cite{JPetersFcoefs}. 

Let $\ell$ be a left null vector of $\matLI$,
i.e.\ for all $\sft \in \idxft$,  $\kp := \df+1$ and 
$p(\xx) := \xx^{\kp} \sum_\dg \ell_\dg \xx^{\dg}$
\begin{equation}
   0 = \divdif{\tsft} \sum_{\dg=0}^\nc \ell_\dg \xx^{\kp+\dg}
   = \divdif{\tsft} \xx^{\kp} \sum_{\dg=0}^\nc \ell_\dg \xx^{\dg} 
   = \divdif{\tsft} p(\xx).
   \label{eq:nullp}
\end{equation}
Let $q$ be the interpolant of $p$ at $\kft_{\sft:\sft+\kp+1}$,
i.e.\ spanning two consecutive hence overlapping knot sequences.
If knots repeat, $q$ is a Hermite interpolant.
By Rolle's theorem, the derivative 
$D(p-q)$, vanishes at a set of knots $\ssft$ interlaced with
$\kft_{\sft:\sft+\kp+1}$, i.e.\ $\kft_i\le \kft^1_i\le \kft_{i+1}$.
The inequality is strict unless $\kft_i = \kft^1_i$ represents a multiple root.
Then $Dq$ (Hermite) interpolates $Dp$ at $\ssft$ and by the relation
between divided difference and derivatives,
\begin{align*}
   \kp\divdif{\tsft} p = \divdif{\ssftm} Dp 
   &\text{ and }
   \kp\divdif{\tsftp} p = \divdif{\ssftp} Dp.
\end{align*}
Induction yields 
\begin{align*}
\kp! \divdif{\tsft} p 
   = 2\divdif{\kft^{\kp-1}_{\sft:\sft+1}} D^{\kp-1}p,
   &\text{ and }
\kp! \divdif{\tsftp} p 
   = 2\divdif{\kft^{\kp-1}_{\sft+1:\sft+2}} D^{\kp-1}p
   \\
\text{ and finally } 
\kp! \divdif{\tsft} p = D^{\kp}p(\kft^\kp_\sft),\ 
   &\text{ and }
\kp! \divdif{\tsftp} p = D^{\kp}p(\kft^\kp_{\sft+1}) 
\end{align*}
for $\kft^{\kp}_\sft \le \kft^{\kp}_{\sft+1}$.
That is,
the $\kp$th divided difference of each sequence equals
the $\kp$th derivative at points $\kft^{\kp}_\sft$
respectively $\kft^{\kp}_{\sft+1}$;
and the shift of the subsequence of knots
from $\tsft$ to $\tsftp$ implies that 
where $\kft^{\kp}_{\sft+1}$ is either strictly to the right of 
$\kft^{\kp}_\sft$ or $\kft^{\kp}_{\sft}$ is a multiple root. 
Then \eqref{eq:nullp} implies  
that $D^{\kp} p$, a polynomial of degree at most $\nc$,
has $\nc+1$
roots counting multiplicity, hence is the zero polynomial.
Given the factor $t^\kp$ of $p$, this can only hold if $\ell=0$, 
i.e.~$M_{0,\bkft,\idxft}$ has no non-trivial left null vector
and, as a square matrix, $M_{0,\bkft,\idxft}$ is invertible. 

Since all sequences $\tsft$, $\sft \in \idxft$ can be obtained
by repeated shifts to the right, the conclusion $\ell=0$ holds for 
$\sft\in \idxft$ and $M^{-1}_{0,\bkft,\idxft}$ is well-defined.
%
%
%
%
%
\end{proof}
\renewcommand{\tsft}{\kft_{\sft:\sft+\df+1}}

\mysecondproof{
\begin{proof}
Applying Peano's formula, 
$
   \frac{1}{k!} \int_\R \bsp(t|t_{0:k}) g^{(k)}(t) dt
   =
   \divdif(t_{0:k}) g 
$
to $g(t) = \kft^{\df+1+\dg}$, we see that invertibility
of $M_{0,\bkft,\idxft}$ is equivalent to 
\begin{align}
   \int \Bsp{s}{\kft_{sft:sft+df+1}} p(s) \drm s &= 0
   \text{ for all } p \in \Pi_\nc,
   \text{ for all } \sft \in \idxft 
   \label{eq:invert}
   \\
   \notag
   \text{ implies }
   p=0
\end{align}
Since $\Bsp{s}{\kft_{sft:sft+df+1}}\ge 0$, $p$ must be zero or change sign 
so the integral is zero. 
This implies a root in $(\kft_{sft}..\kft_{sft+df+1})$.
The right-shifted neighbor $\Bsp{s}{\kft_{sft+\gamma:sft+df+1+\gamma}}$,
however, requires a different root further to the right to 
set its integral to zero. The resulting $\nc+1$
distict roots imply \eqref{eq:invert}.
\end{proof}
}

\subsection{Review of Symmetric and Boundary SIAC filters}
\label{sec:intro-osFt}
We split the \DG\ data at any known discontinuities
and treat the domains separately.
Then convolution can be applied throughout a given closed
interval $[\lft..\rgt]$. 

A SIAC spline \kernel\ with knot sequence $t_{0:\nc+\df+1}$ 
is \emph{symmetric} (about the origin in $\R$)  if
$t_\ell + t_{\nc+\df+1-\ell}=0$ for $\ell=0: \lceil (\nc+\df+1)/2 \rceil$.
Convolution with a symmetric SIAC \kernel\ of a function $g$ at $x$ 
then requires 
$g$ to be defined in a two-sided neighborhood of $x$. 
Near boundaries, Ryan and Shu \cite{siac2003} therefore suggested 
convolving the \DG\ output with a  one-sided \kernel\
whose support is shifted to one side of the origin:
for $x$ near the left domain endpoint $\lft$,
the one-sided SIAC \kernel\ is defined over
$ (x-\lft) + h \big( -(3d+1):0 \big)$
where $\ddg$ is the degree of the \DG\ output.
The Ryan-Shu $\xx$-position-dependent 
one-sided kernel yields optimal $L^2$-convergence,
but its point-wise error near $\lft$ can be larger than that of the
\DG\ output.

\par 
In \cite{siac2011}, Ryan et al.\
improved the one-sided \filter\ by increasing its monomial reproduction
from degree $\drepro = 2\ddg$ to degree $\drepro = 4\ddg$.
This one-sided \kernel\ reduces the boundary error when $\dDG=1$
but the \kernel\ support is increased by $2\ddg$ additional knot intervals
and 
numerical roundoff requires high precision calculations to determine
the \kernel's coefficients.
(\cite{siac2011} did not draw conclusions for degrees $\dDG>1$.)
\par
Ryan-Li-Kirby-Vuik \cite{siac2014} suggested 
an alternative position-dependent 
one-sided \filter\ that has the same support size
as the symmetric kernel and its reproduction degree is only higher by one. 
The new idea is that the spline space defining the \filter\
is enriched by one B-spline.
The new \filter\ computation is
stable up to degree $\dDG=4$ in double precision.
When $\dDG=1$ the RLKV \filter's point-wise error 
on the canonical test problem is as low as that
of the symmetric SIAC \filter\ that is applied in the interior.
However, when $\dDG>1$, the RLKV error is higher than
that of the symmetric \filter. 
\par 
In 
\cite{siac2003,siac2011,siac2012,siac2014} 
convolution with position-dependent one-sided \filter s 
is computed as follows.
For each domain position $\xx$,
\begin{itemize}
\item[\itmsym] \emph{Calculate \filter\ coefficients}: \\
compute the entries of the 
position-dependent reproduction matrix $M$ by Gaussian quadrature;
solve a corresponding linear system $M\mc=\bp$ for the kernel coefficients
$\mc$ to match the monomials to be reproduced, collected in the vector $\bp$. 
As pointed out in \cite{siac2014}, $M$ may be close to singular
(for example, when $\ddg\geq3$ for the SRV \filter s)
so that higher numerical precision (e.g.\ quadruple precision) is required
to assemble and solve the linear system.
\item[\itmsym] {\it Convolve the \filter\ with the 
\DG\ output} by Gaussian quadrature. 
\end{itemize}
Note that, unlike the (position-independent)
classical symmetric SIAC filter, the position-dependent boundary
kernel coefficients have to be determined afresh for each point $\xx$.
\section{Coefficients of shifted and scaled {\ft}s}
\label{sec:overShiftedKnots}
\newcommand{\dft}{\df}
To calculate filters for \DG\ output more
efficiently and more stably, we reformulate \lemref{thm:siac} in
\emph{multi-index notation}:
\begin{align*}
   \kft^\midx_{0:\nkft} 
   &:=
   \kft_0^{\ix_0} \ldots \kft_{\nkft}^{\ix_{\nkft}}
   \quad \text{and}\quad
   |\kft_{0:\nkft}| := \sum^{\nkft}_{j=0} |\kft_j|
\end{align*}
as follows.

\begin{lemma}  [\siac\ reproduction matrix]
The matrix Eq. \eqref{eq:M} has the alternative form
\begin{align}
   M := M_{0,\kft_{0:\nkft},\idxft}
   &= \left[ \begin{matrix}
    \sum\limits_{|\midx|=\dg} 
    \kft_{\sft:\sft+\dft+1}^{\midx}   
   \end{matrix} \right]_{\dg=0:\nc,\,\sft\in\idxft}.
   \label{eq:M0powerForm}         
\end{align}
\label{lem:M0}
\end{lemma}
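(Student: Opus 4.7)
The assertion reduces, row by row and column by column, to the scalar identity
$$\divdif{\tsft}\,\xx^{\df+1+\dg} \;=\; \sum_{|\midx|=\dg} \kft^{\midx}_{\sft:\sft+\df+1}, \qquad \sft\in\idxft,\ \dg=0,\ldots,\nc.$$
This is the classical Popoviciu/Hermite identity that the divided difference of a monomial $\xx^{k+\dg}$ evaluated over $k+1$ knots equals the complete homogeneous symmetric polynomial of degree $\dg$ in those knots --- here with $k=\df+1$ (so $\df+2$ knots). So the entire plan is to establish this single scalar formula; the matrix version of Eq.~\eqref{eq:M0powerForm} then follows by reading off rows $\dg$ and columns $\sft$ of the matrix supplied by \lemref{thm:siac}.

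The cleanest route is via the Hermite--Genocchi integral representation
$$\divdif{\tsft}\, g \;=\; \int_{\Delta^{\df+1}} g^{(\df+1)}\!\Big(\textstyle\sum_{i=0}^{\df+1}\kft_{\sft+i}\,s_i\Big)\,\drm s,$$
which holds for every knot vector --- including the confluent sequences accommodated by Eq.~\eqref{eq:divdif} --- whenever $g$ is smooth, hence certainly for $g(\xx)=\xx^{\df+1+\dg}$. Substituting $g^{(\df+1)}(\xx)=\tfrac{(\df+1+\dg)!}{\dg!}\,\xx^\dg$, expanding the affine combination inside by the multinomial theorem, and evaluating the resulting simplex integrals by the Dirichlet identity $\int_{\Delta^{\df+1}} s^{\midx}\,\drm s = \midx!/(\df+1+\dg)!$, every factorial cancels against the $\dg!/\midx!$ coming from $\binom{\dg}{\midx}$. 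What remains is exactly $\sum_{|\midx|=\dg}\kft^{\midx}_{\sft:\sft+\df+1}$.

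An equivalent, purely algebraic alternative is double induction on $\df$ and $\dg$ using the divided-difference Leibniz rule $\divdif{\tsft}(\xx\cdot \xx^{n-1}) = \kft_\sft\,\divdif{\tsft}\xx^{n-1} + \divdif{\kft_{\sft+1:\sft+\df+1}}\xx^{n-1}$, matched term by term against the standard recursion $h_\dg(\kft_{\sft:\sft+\df+1}) = \kft_\sft\,h_{\dg-1}(\kft_{\sft:\sft+\df+1}) + h_\dg(\kft_{\sft+1:\sft+\df+1})$ for complete homogeneous symmetric polynomials. I expect the only real obstacle to be notational rather than mathematical: reconciling the Hermite--Genocchi normalization with the confluent branch of Eq.~\eqref{eq:divdif}, and reading the multi-index $\midx$ in $\kft^{\midx}_{\sft:\sft+\df+1}$ as a length-$(\df+2)$ tuple with $|\midx|=\dg$, even though the abbreviated multi-index notation set up just before the lemma statement uses tuples of length $\nkft+1$.
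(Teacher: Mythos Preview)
Your proposal is correct. You correctly reduce the matrix identity to the scalar identity that the divided difference of a monomial over a knot sequence equals the complete homogeneous symmetric polynomial in those knots, and your Hermite--Genocchi derivation (with the Dirichlet simplex integral) is sound; the alternative Leibniz-rule induction you sketch also works.

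The paper's own proof, however, is a single sentence: it simply invokes Steffensen's formula (citing de~Boor, \cite[Eq.~(27)]{deboor}) for the divided difference of a monomial, which is precisely the scalar identity you set out to prove. So where the paper appeals to a named result in the literature, you reprove that result from first principles. Your route is self-contained and makes explicit why confluent knots cause no trouble (Hermite--Genocchi handles them automatically), at the cost of being considerably longer than necessary for a lemma the authors regard as a restatement of a textbook fact. Your closing remark about the length of the multi-index $\midx$ is well taken: in the displayed formula $\midx$ is indeed a $(\df+2)$-tuple indexed by $\sft{:}\sft{+}\df{+}1$, not an $(\nkft{+}1)$-tuple as the notation block preceding the lemma might suggest, but this is a harmless overloading and does not affect the argument.
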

\begin{proof}
Applying Steffensen's formula 
\cite[Eq.~(27)]{deboor}\footnote{The index $\alpha$ 
in \cite[Eq.~(27)]{deboor} is misprinted. It should be: $|\alpha| = n-k+1$.},
the divided differences of monomials in \eqref{eq:M} can be rewritten
as \eqref{eq:M0powerForm}.
\end{proof}

Denote the reproduction matrix associated with the shifted knot sequence 
$
   \kft_{\sft:\sft+\dft+1} + \xi,
   \, \xi \in \mathbb{R}
$ as
\begin{equation}
   M_\xi := \left[ \begin{matrix}          
          \sum\limits_{|\midx| = \delta}
          (\kft_{\sft:\sft+\dft+1} + \xi)^{\midx}
         \end{matrix} \right]_{\dg=0:\nc,\,\sft\in\idxft}.
\label{eq:Mxi}
\end{equation}
Since each entry $M_\xi(\dg,\sft)$ is a polynomial of degree $\dg$
in $\xi$, the determinant of $M_\xi$ is a polynomial of degree
$\nc(\nc+1)/2$ in $\xi$.  Using for example Cramer's rule, 
the entries of $M^{-1}_\xi$ are rational functions in $\xi$ whose 
nominator and denominator are polynomials of degree
$\nc(\nc+1)/2$ in $\xi$.
Since the convolution coefficients are the entries of the 
first column of $M^{-1}_\xi$, it is remarkable, that we can show that
the coefficients are not rational but polynomial and of 
degree $\nc$ rather than $\nc(\nc+1)/2$ in $\xi$.

\par 
To prove this claim, we employ the following technical result 
that generalizes a binomial indentity to multiple indices. 
\begin{lemma} [technical lemma]
\label{lem:binomIden}
We abbreviate
$\binom{\bb}{\ab} := \binom{b_0}{a_0}\ldots \binom{b_{\dft+1}}{a_{\dft+1}}$ and
write $\ab \leq \bb$ to indicate that, for each $k\mathrm{th}$ component,
$a_k \leq b_k$. Then for $\delta > |\ab|$,
\begin{equation} \label{eq:binom2}
 \sum_{\alphab \geq \ab,\, |\alphab| = \dg} \binom{\alphab}{\ab} = 
\binom{\dg+\dft+1}{|\ab|+\dft+1} = \binom{\dg+\dft+1}{\dg-|\ab|}.
\end{equation}
\end{lemma}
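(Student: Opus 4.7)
The plan is to reduce \eqref{eq:binom2} to a standard Vandermonde-type convolution by a change of summation variable and then evaluate that convolution by generating functions. First I would substitute $\boldsymbol{\beta} := \alphab - \ab$, so that the constraint $\alphab \geq \ab$ becomes $\boldsymbol{\beta} \geq \mathbf{0}$, the constraint $|\alphab| = \dg$ becomes $|\boldsymbol{\beta}| = \dg - |\ab|$ (non-negative by hypothesis), and each factor rewrites as $\binom{\alpha_k}{a_k} = \binom{a_k + \beta_k}{a_k}$. This recasts the left-hand side of \eqref{eq:binom2} as
\[
   \sum_{\substack{\boldsymbol{\beta} \geq \mathbf{0} \\ |\boldsymbol{\beta}| = \dg - |\ab|}}\ \prod_{k=0}^{\dft+1} \binom{a_k + \beta_k}{a_k}.
\]

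Next I would recognize this multivariate convolution as the $(\dg-|\ab|)$th coefficient of a product of single-variable generating functions. The identity $\sum_{\beta \geq 0} \binom{a+\beta}{a} x^\beta = (1-x)^{-a-1}$, applied componentwise for $k = 0,\ldots,\dft+1$, gives
\[
   \prod_{k=0}^{\dft+1} (1-x)^{-a_k-1} = (1-x)^{-|\ab|-\dft-2}.
\]
Reading off the coefficient of $x^{\dg-|\ab|}$ via $(1-x)^{-N} = \sum_{j \geq 0} \binom{N+j-1}{j}\, x^j$ with $N := |\ab|+\dft+2$ and $j := \dg - |\ab|$ yields $\binom{\dg+\dft+1}{\dg-|\ab|}$. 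The second equality in \eqref{eq:binom2} is then just the symmetry $\binom{n}{k} = \binom{n}{n-k}$ with $n := \dg+\dft+1$ and $k := |\ab|+\dft+1$.

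No step is expected to be seriously hard; the only point that deserves care is the observation that the multi-dimensional sum separates into a product through the generating function (equivalently, through iterated Vandermonde convolution $\sum_j \binom{m}{j}\binom{n}{k-j} = \binom{m+n}{k}$), which is exactly what collapses the right-hand side to a single binomial coefficient rather than a more complicated multinomial expression. An alternative proof by induction on $\dft$, with Vandermonde supplying the induction step and the single-component case $\sum_{|\alpha|=\dg,\,\alpha\geq a}\binom{\alpha}{a}=\binom{\dg}{a}$ as the base, would give the same conclusion and may be preferred if one wishes to avoid formal power series.
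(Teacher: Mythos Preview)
Your proposal is correct and follows essentially the same generating-function argument as the paper: the paper multiplies the identities $\frac{x^{a_k}}{(1-x)^{a_k+1}} = \sum_{\ell \geq a_k} \binom{\ell}{a_k} x^\ell$ over $k=0,\ldots,\dft+1$ and extracts the coefficient of $x^{\dg}$, whereas you first shift indices by $\boldsymbol{\beta}=\alphab-\ab$ and then do the same product-and-extract step at exponent $\dg-|\ab|$. The two presentations differ only by the factor $x^{|\ab|}$ that your change of variable removes up front.
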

\begin{proof}
By the Maclaurin expansion: $\frac{1}{(1-x)^{k+1}} = \sum_{\ell=0}^{\infty} \binom{k+\ell}{k} x^\ell$, $|x|<1$, we see that
\begin{equation} \label{eq:binomIden1}
 \frac{x^k}{ (1-x)^{k+1}}  = \sum_{\ell \geq k} \binom{\ell}{k} x^\ell, \quad |x|<1.
\end{equation}
Applying Eq.~\eqref{eq:binomIden1} to both sides of the identity
\begin{equation}
 x^{\df+1} \, \frac{x^{a_0}}{(1-x)^{a_0+1}} \cdots 
 \frac{x^{a_{\df+1}}}{(1-x)^{a_{\df+1}+1}}
 = \frac{x^{|\ab|+\dft+1}} {(1-x)^{|\ab|+\dft+2}},
\end{equation}
and dividing both sides by $x^{\df+1}$, we see that 
\begin{equation} \label{eq:binom1}
 \sum_{\alphab \geq \ab} \binom{\alphab}{\ab} x^{|\alphab|} = 
 \sum_{\ell \geq |\ab| +\dft +1} \binom{\ell}{|\ab|+\dft+1} x^{\ell-\df-1}.
\end{equation}
Selecting the coefficients of $x^{\dg}$ from both
sides of Eq.~\eqref{eq:binom1} yields Eq.~\eqref{eq:binom2}. 
\end{proof}

\begin{lemma} [Reproduction matrix for shifted knots]
The matrices $M_\xi$ and $M := M_{0,\kft_{0:\nkft},\idxft}$
are related by 
\begin{equation} 
    M_\xi  = P_{\dft,\nc}(\xi) \, M := \left[ 
   \begin{matrix}
   1 \\
   \binom{1+\dft+1}{1} \xi & 1\\
   \vdots & \vdots & \ddots\\
   \binom{\nc +\dft+1}{\nc} \xi^\nc & \cdots & \binom{\nc +\dft+1}{1} \xi& 1
   \end{matrix}
    \right] \, M.
\label{eq:MxiM0}
\end{equation}
\label{lem:Mxi}
\end{lemma}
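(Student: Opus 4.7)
The plan is to compute the $(\dg,\sft)$ entry of $M_\xi$ directly from its definition in Eq.~\eqref{eq:Mxi}, expand the shift $\xi$ out of each factor using a multi-index binomial expansion, rearrange the sums, and then apply \lemref{lem:binomIden} to collapse the inner sum into the desired binomial coefficient.

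Concretely, I would start by writing each term as
\begin{equation*}
   (\kft_{\sft:\sft+\dft+1}+\xi)^{\alphab}
   = \sum_{\ab\le\alphab} \binom{\alphab}{\ab}\,
     \kft_{\sft:\sft+\dft+1}^{\ab}\,\xi^{|\alphab|-|\ab|},
\end{equation*}
substitute this into the defining formula for $M_\xi(\dg,\sft)$ (keeping the outer sum over $|\alphab|=\dg$), and then swap the order of summation so that $\ab$ runs over multi-indices with $|\ab|\le\dg$ while $\alphab$ runs, for each fixed $\ab$, over those with $\alphab\ge\ab$ and $|\alphab|=\dg$. Since $|\alphab|-|\ab| = \dg-|\ab|$ factors out of the inner sum, what is left is exactly the sum $\sum_{\alphab\ge\ab,\,|\alphab|=\dg}\binom{\alphab}{\ab}$ that appears on the left-hand side of Eq.~\eqref{eq:binom2}.

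Applying \lemref{lem:binomIden} replaces this inner sum with $\binom{\dg+\dft+1}{\dg-|\ab|}$, a quantity that depends on $\ab$ only through $|\ab|$. I would then regroup the remaining sum over $\ab$ by the value $\dg' := |\ab|$. The inner sum $\sum_{|\ab|=\dg'}\kft_{\sft:\sft+\dft+1}^{\ab}$ is precisely the $(\dg',\sft)$ entry of $M := \matLI$ by \lemref{lem:M0}, so I arrive at
\begin{equation*}
   M_\xi(\dg,\sft)
   = \sum_{\dg'=0}^{\dg}
     \binom{\dg+\dft+1}{\dg-\dg'}\,\xi^{\dg-\dg'}\, M(\dg',\sft),
\end{equation*}
which is the $(\dg,\sft)$ entry of $P_{\dft,\nc}(\xi)\,M$ as defined in \eqref{eq:MxiM0}. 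In particular, the lower-triangular shape of $P_{\dft,\nc}(\xi)$ is automatic because the inner sum is empty whenever $\dg'>\dg$.

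I expect the only genuine content of the proof to be the swap-and-regroup step and the identification of the inner sum with the hypothesis of \lemref{lem:binomIden}; the remaining manipulation is just multi-index bookkeeping. The main thing to be careful about is that the factor $\xi^{\dg-\dg'}$ be tracked correctly through the swap and that the binomial coefficient produced by \lemref{lem:binomIden} be written in the form $\binom{\dg+\dft+1}{\dg-\dg'}$ that matches the entries of $P_{\dft,\nc}(\xi)$ in Eq.~\eqref{eq:MxiM0} rather than the equivalent form $\binom{\dg+\dft+1}{\dg'+\dft+1}$.
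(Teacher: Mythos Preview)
Your proposal is correct and follows essentially the same route as the paper's proof: expand $(\kft_{\sft:\sft+\dft+1}+\xi)^{\alphab}$ via the multi-index binomial formula, swap the order of summation, and invoke \lemref{lem:binomIden} to collapse the inner sum. The only cosmetic difference is that the paper indexes the expansion by the $\xi$-part $\lb$ (with $\xi^{|\lb|}$) and then substitutes $\ab=\alphab-\lb$, whereas you index directly by the knot-part $\ab$; your final identity with summation index $\dg'$ matches the paper's Eq.~\eqref{eq:Mxi2} under the change $\beta=\dg-\dg'$.
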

(In \eqref{eq:MxiM0}, only non-zero entries are shown.)
Note that $P_{\dft,\nc}(1)$ is the result of deleting
the first $\dft+1$ rows and $\dft+1$ columns of the lower
triangular Pascal matrix of order $\nc+\dft+1$. 
\begin{proof}
Abbreviating $\tb_j := t_{j:j+\df+1}$, the entry $(\dg,\sft)$ of $M_\xi$ defined by Eq.~\eqref{eq:Mxi} is
 \begin{align}
  M_\xi(\dg,\sft) &= \sum_{|\alphab|=\dg} \sum_{\boldmath{0} \leq \lb \leq \alphab} \binom{\alphab}{\lb} \, \xi^{|\lb|} \, \tb_\sft^{\alphab - \lb}
  \label{eq:Mxi1} 
  \\
   &= 
   \sum_{|\alphab|=\dg} \sum_{\beta=0}^{\dg} \xi^\beta  \sum_{|\lb| = \beta,\, \boldmath{0} \leq \lb \leq \alphab} \binom{\alphab}{\lb} \, \tb_\sft^{\alphab - \lb}     
     = 
\sum_{\beta=0}^{\dg} \xi^\beta  \sum_{|\ab|=\dg-\beta} \tb_\sft^{\ab} \sum_{|\alphab| = \dg,\, \alphab \geq \ab} \binom{\alphab}{\ab}. 
\notag
 \end{align}
The last equality of Eq.~\eqref{eq:Mxi1} follows by substituting
$\ab = \alphab-\lb$. 
Applying Lemma~\ref{lem:binomIden} to \equaref{eq:Mxi1}
and noting that $|\ab|=\dg-\beta$, we see that
\begin{equation} \label{eq:Mxi2}
 M_\xi(\dg,\sft) = \sum_{\beta=0}^{\dg} \binom{\dg+\dft+1}{\beta} \xi^\beta  
M(\dg-\beta,\sft). 
\end{equation}
\equaref{eq:Mxi2} is the expanded form of \equaref{eq:MxiM0}.
\end{proof}

Next we consider the effect of scaling knots, 
as might be done to
refine a \DG\ computation.

\begin{lemma} [Reproduction matrix for scaled knots]
Let $\diag(\vb)$ denote the square matrix with diagonal $\vb$ and 
zero otherwise. Then for $\bkft := \kft_{0:\nkft}$
\begin{align}
   M_{0,h\bkft,\idxft}^{-1}
   &=
   M_{0,\bkft,\idxft}^{-1}\diag(h^{-(0:\nc)}),
   &\text{ where }
   h^{-(0:\nc)} := [1,h^{-1},\ldots,h^{-\nc}].
   \label{eq:scale}
\end{align}
\label{lem:M0h}
\end{lemma}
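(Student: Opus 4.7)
The plan is to exploit the power-form expression for the reproduction matrix established in \lemref{lem:M0} and to use the homogeneity of each entry as a symmetric polynomial in the knots. Specifically, replacing the knot sequence $\bkft = \kft_{0:\nkft}$ by the scaled sequence $h\bkft$ changes the $(\dg,\sft)$ entry of the reproduction matrix from
$$\sum_{|\midx|=\dg} \kft_{\sft:\sft+\dft+1}^{\midx}
\quad\text{to}\quad
\sum_{|\midx|=\dg} (h\kft_{\sft:\sft+\dft+1})^{\midx}.$$
Since the constraint $|\midx|=\dg$ forces every monomial in the sum to have total degree exactly $\dg$ in the knots, the factor $h$ may be pulled out as $h^{\dg}$. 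Thus the $\dg$-th row of $M_{0,h\bkft,\idxft}$ is exactly $h^{\dg}$ times the $\dg$-th row of $M_{0,\bkft,\idxft}$.

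Rewriting this row-wise scaling as a matrix product yields the factorization
$$M_{0,h\bkft,\idxft} \;=\; \diag(h^{0:\nc})\,M_{0,\bkft,\idxft}.$$
Since \lemref{thm:siac} guarantees that $M_{0,\bkft,\idxft}$ is invertible, and since $\diag(h^{0:\nc})$ is obviously invertible with inverse $\diag(h^{-(0:\nc)})$ for $h\ne 0$, inverting both sides and using $(AB)^{-1}=B^{-1}A^{-1}$ immediately gives the claimed identity
$$M_{0,h\bkft,\idxft}^{-1} \;=\; M_{0,\bkft,\idxft}^{-1}\,\diag(h^{-(0:\nc)}).$$

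I do not foresee a genuine obstacle here: the key insight is only that the degree constraint $|\midx|=\dg$ makes each matrix entry a homogeneous polynomial of degree $\dg$ in the knots, so scaling factors out cleanly row-by-row. The only thing to be careful about is the direction of multiplication (the diagonal matrix multiplies on the \emph{left} before inversion, hence on the \emph{right} after inversion), which matches the statement of the lemma.
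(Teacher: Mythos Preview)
Your proof is correct and follows essentially the same approach as the paper: both use the power-form expression of \lemref{lem:M0} to observe that scaling the knots by $h$ multiplies the $\dg$-th row by $h^{\dg}$, giving $M_{0,h\bkft,\idxft} = \diag(h^{0:\nc})\,M_{0,\bkft,\idxft}$, and then invert. Your version is slightly more explicit about the homogeneity and the invertibility, but the argument is identical in substance.
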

\begin{proof}
By \lemref{lem:M0}, multiplying the $(\delta+1)$-th row of
$M_{0,\bkft,\idxft}$ by $h^{\delta}$ yields
$M_{0,h\bkft,\idxft}$, $\delta=0,\ldots,\nc$ and hence 
\begin{equation*}
   M_{0,h\bkft,\idxft}
   = \diag([1,h,\ldots,h^{\nc}]) \, 
   M_{0,\bkft,\idxft}
\end{equation*}
which is  equivalent to \eqref{eq:scale}.
\end{proof}
 
Alltogether, we obtain the following semi-explicit formula for 
the filter coefficients.

\begin{theorem} [Scaled and shifted \siac\ coefficients]
The \siac\ filter coefficients $\cfno{\xi;\ell}$
associated with the knot sequence
$
   h\bkft+ \xi 
$
are polynomials of degree $\nc$ in $\xi$:
\begin{equation} 
   \mc_\xi    
   := [\cfno{\xi,\ell}]_{\ell=0:\nc}    
   = \ M_{0,\bkft,\idxft}^{-1}
   \diag(\left[\begin{matrix}(-1)^\ell \, \binom{\ell+\df+1}{\ell}
   \end{matrix}\right]_{\ell=0:\nc})
   \diag(h^{-(0:\nc)})
   \big( \xi^{0:\nc} \big)^{\tr}.
\label{eq:cxih}
\end{equation}
\label{thm:oneSidedCoefs}
\end{theorem}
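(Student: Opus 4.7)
The plan is to obtain the coefficient vector by composing the two previous lemmas, then to extract a clean closed form for the first column of the inverse Pascal-like factor.

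First I would apply Lemma~\ref{lem:Mxi} with the prototype replaced by the scaled sequence $h\bkft$. Since $P_{\df,\nc}(\xi)$ depends only on $\df$, $\nc$, $\xi$ and not on the knots, the identity $M_{h\bkft+\xi,\idxft} = P_{\df,\nc}(\xi)\,M_{0,h\bkft,\idxft}$ follows immediately. Inverting and then using Lemma~\ref{lem:M0h} to replace $M_{0,h\bkft,\idxft}^{-1}$ by $M_{0,\bkft,\idxft}^{-1}\diag(h^{-(0:\nc)})$, I obtain
\begin{equation*}
   M_{h\bkft+\xi,\idxft}^{-1}
   \ =\ M_{0,\bkft,\idxft}^{-1}\,\diag(h^{-(0:\nc)})\,P_{\df,\nc}(\xi)^{-1}.
\end{equation*}
By Lemma~\ref{thm:siac}, $\mc_\xi$ is the first column of the left-hand side, equivalently the product of the factors above with the first column $\vb(\xi)$ of $P_{\df,\nc}(\xi)^{-1}$.

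The crux of the argument is therefore to compute $\vb(\xi)$ in closed form and observe that its $\ell$-th entry is polynomial, not merely rational, in $\xi$. My approach is to factor $P_{\df,\nc}(\xi) = D_\xi\,P_{\df,\nc}(1)\,D_\xi^{-1}$ with $D_\xi := \diag(\xi^{0:\nc})$, which one checks entrywise from the display in Lemma~\ref{lem:Mxi}. Hence $P_{\df,\nc}(\xi)^{-1} = D_\xi\,P_{\df,\nc}(1)^{-1}\,D_\xi^{-1}$, and because $D_\xi^{-1}$ fixes the first standard basis vector, $\vb(\xi) = D_\xi\,\vb(1)$. It remains to identify $\vb(1)$: since $P_{\df,\nc}(1)$ is the trailing $(\nc{+}1)\times(\nc{+}1)$ block of the lower-triangular Pascal matrix $L$, and since for block-lower-triangular $L$ the trailing diagonal block of $L^{-1}$ equals the inverse of the trailing diagonal block of $L$, the entries of $P_{\df,\nc}(1)^{-1}$ are $(-1)^{i-j}\binom{i+\df+1}{i-j}$. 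The first column is therefore $\vb(1) = [(-1)^\ell\binom{\ell+\df+1}{\ell}]_{\ell=0:\nc}^{\tr}$, and $\vb(\xi)$ picks up a factor $\xi^\ell$ in its $\ell$-th entry.

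Assembling the pieces, the $\ell$-th entry of $\diag(h^{-(0:\nc)})\,\vb(\xi)$ is $(-1)^\ell\binom{\ell+\df+1}{\ell}h^{-\ell}\xi^\ell$, which I can rewrite as
\begin{equation*}
   \diag\!\left(\left[(-1)^\ell\binom{\ell+\df+1}{\ell}\right]_{\ell=0:\nc}\right)
   \diag(h^{-(0:\nc)})\,(\xi^{0:\nc})^{\tr},
\end{equation*}
matching the formula \eqref{eq:cxih} verbatim after left-multiplication by $M_{0,\bkft,\idxft}^{-1}$. Polynomiality of degree exactly $\nc$ in $\xi$ is then transparent: $M_{0,\bkft,\idxft}^{-1}$ and the two diagonal matrices are $\xi$-independent, and each entry of $(\xi^{0:\nc})^\tr$ is a monomial of the corresponding degree.

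The main obstacle I anticipate is the inversion of the Pascal-like factor $P_{\df,\nc}(\xi)$: a naive application of Cramer's rule produces rational entries of degree $\nc(\nc+1)/2$, and one must recognize the diagonal conjugation $D_\xi\,P_{\df,\nc}(1)\,D_\xi^{-1}$ together with the Pascal-submatrix inversion identity to collapse this to the clean polynomial form. Everything else is assembly and bookkeeping of diagonal scalings.
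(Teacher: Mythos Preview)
Your proof is correct and follows the same scaffolding as the paper: compose Lemma~\ref{lem:Mxi} (applied to the scaled prototype $h\bkft$) with Lemma~\ref{lem:M0h}, then extract the first column of $P_{\df,\nc}(\xi)^{-1}$. The only substantive difference is in how that first column is obtained. The paper invokes an external identity, $P_{\df,\nc}(\xi)=\bigl(P_{\df,\nc}(1)\bigr)^{\xi}$, from which $P_{\df,\nc}(\xi)^{-1}=P_{\df,\nc}(-\xi)$ follows at once and the first column can be read off directly. You instead observe the diagonal conjugation $P_{\df,\nc}(\xi)=D_\xi\,P_{\df,\nc}(1)\,D_\xi^{-1}$ and then invert $P_{\df,\nc}(1)$ via the block structure of the lower-triangular Pascal matrix. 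Your route is more elementary and self-contained (no external citation needed), at the cost of a small caveat: the conjugation is only defined for $\xi\neq 0$, so you should remark that since $P_{\df,\nc}(\xi)$ is unit lower-triangular its inverse has polynomial entries, and hence the identity for $\vb(\xi)$ extends to $\xi=0$ by continuity. The paper's route is terser but relies on knowing the Pascal matrix-power fact.
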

\begin{proof}
By Lemma~\ref{lem:Mxi} and Lemma~\ref{lem:M0h}, the matrix $M_\xi$ 
corresponding to the scaled and shifted knot sequence
$ h\bkft + \xi $ has the inverse 
\begin{equation}
  M_\xi^{-1} 
  =
  M^{-1}_{0,\bkft,\idxft} \diag(h^{-(0:\nc)})  (P_{\df,\nc}(\xi))^{-1}.
\label{eq:Mxiinv}
\end{equation}
According to \cite{pascalMatrix}, 
$P_{\dft,\nc}(\xi) = (P_{\dft,\nc}(1))^\xi$ and hence 
  $(P_{\dft,\nc}(\xi))^{-1} = P^{-\xi}_{\dft,\nc}(1) = P_{\dft,\nc}(-\xi)$. 
Since Theorem~\ref{thm:siac} requires only the first column of $M_\xi^{-1}$,
we replace, in \eqref{eq:Mxiinv}, $P_{\dft,\nc}(-\xi)$ by its first column 
and obtain
\begin{equation} 
  \mc_\xi = M^{-1}_{0,\bkft,\idxft}\,
  \diag(h^{-(0:\nc)})
  \,
   \diag(\left[\begin{matrix}(-1)^\ell \, \binom{\ell+\dft+1}{\ell} 
\end{matrix}\right]_{\ell=0:\nc})
\,
\big( \xi^{0:\nc} \big)^{\tr}.
\label{eq:cxih2}
 \end{equation}
 Eq.~\eqref{eq:cxih} follows, because the diagonal matrices commute.
\end{proof}
Compared to \eqref{eq:cxih2} formula \eqref{eq:cxih} has the advantage that
it groups together two matrices that can be pre-computed independent of $h$ and $\xi$.

The following corollary implies that the \kernel\ coefficients 
$\cfno{\xi,\ell}$, can
be computed stably, as scaled integers.
\begin{corollary} [Rational \siac\ filter coefficients $\cfno{\xi,\ell}$]
If the knots $\kft_{0:\nkft}$ are rational, then the 
filter coefficients $\cfno{\xi,\ell}$ 
are polynomials in $\xi$ and $h$ with rational coefficients.
\label{lem:rational}
\end{corollary}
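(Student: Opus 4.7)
The plan is to derive the corollary directly from the factored formula \eqref{eq:cxih} of Theorem~\ref{thm:oneSidedCoefs} by checking rationality of each factor in turn. Since Theorem~\ref{thm:oneSidedCoefs} already exhibits $\mc_\xi$ as a product of a fixed (in $\xi,h$) matrix with explicit $h$- and $\xi$-dependent diagonal factors and the vector $(\xi^{0:\nc})^\tr$, the only nontrivial issue is showing that the fixed matrix $M_{0,\bkft,\idxft}^{-1}$ has rational entries whenever the knots are rational.

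First I would invoke Lemma~\ref{lem:M0} to write
\[
    M_{0,\bkft,\idxft}(\dg,\sft)
    = \sum_{|\midx| = \dg} \kft_{\sft:\sft+\df+1}^{\midx},
\]
so each entry is a polynomial in $\kft_0,\ldots,\kft_\nkft$ with nonnegative integer coefficients (in fact all equal to $1$). Under the hypothesis that the knots are rational, the entries of $M_{0,\bkft,\idxft}$ are therefore rational numbers.

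Second, I would argue that $M^{-1} := M_{0,\bkft,\idxft}^{-1}$ also has rational entries. Lemma~\ref{thm:siac} guarantees invertibility, so $\det M \ne 0$, and Cramer's rule expresses each entry of $M^{-1}$ as a signed $\nc\times\nc$ minor of $M$ divided by $\det M$. Since minors and determinants are integer-coefficient polynomials in the matrix entries, and those entries are rational, the entries of $M^{-1}$ are rational.

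Third, I would assemble the formula. The remaining factors in \eqref{eq:cxih} are the integer diagonal $\diag\bigl([(-1)^\ell\binom{\ell+\df+1}{\ell}]_{\ell=0:\nc}\bigr)$, the diagonal $\diag(h^{-(0:\nc)})$, and the vector $(\xi^{0:\nc})^\tr$. Multiplying these out, the $\ell$th component becomes
\[
    \cfno{\xi,\ell} = \sum_{j=0}^{\nc} [M^{-1}]_{\ell,j}\,(-1)^j\binom{j+\df+1}{j}\,h^{-j}\,\xi^{j},
\]
a polynomial in $\xi$ (and in $h^{-1}$) of degree at most $\nc$, with coefficients that are rational combinations of the entries of $M^{-1}$ and thus rational numbers. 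There is no real obstacle; the lone substantive step is the Cramer's-rule argument, which relies only on the invertibility already proved in Lemma~\ref{thm:siac}. The rest is bookkeeping on the explicit factorization supplied by Theorem~\ref{thm:oneSidedCoefs}.
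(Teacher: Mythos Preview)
Your argument is correct and follows essentially the same route as the paper: invoke Lemma~\ref{lem:M0} to see that the entries of $M_{0,\bkft,\idxft}$ are rational when the knots are, then appeal to Cramer's rule for the rationality of $M_{0,\bkft,\idxft}^{-1}$, and read off the rest from the explicit factorization~\eqref{eq:cxih}. You simply spell out the bookkeeping on the remaining diagonal factors more explicitly than the paper does; your parenthetical remark that the dependence is really on $h^{-1}$ rather than $h$ is a fair observation about the corollary's phrasing.
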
 
\begin{proof}
  Lemma \ref{lem:M0} implies that the entries of $M$ are rational
  if the knots are rational. Since the determinant of a matrix with rational
  entries is rational, for example Cramer's rule implies
  that the convolution coefficients are rational.
\end{proof}

\section{Position-dependent (\bsiac) filtering}
\label{sec:filtered}
In this section, we first derive a general factored expression 
for the convolution of \bsiac\ filters with \DG\ data.
Then we specialize the setup to one-sided \bsiac\ filters 
when the \DG\ breakpoint sequence is uniform.

\subsection{Position-dependent (\bsiac) filters} \label{sec:filteredDG:general}

When symmetric SIAC filtering increases the smoothness of the \DG\ 
output, the result is in general a piecemeal function.
One may expect the same of any one-sided kernel.
However, this section proves that convolution with position-dependent
\bsiac-filters yields
a single polynomial piece over their interval of application.
We start by defining position-dependent \kernel s.

\begin{definition} [\bsiac\ kernel] 
A \bsiac\ \kernel\ at position $\xx$ has the form 
\begin{equation} \label{eq:filterOverSK}
   f_\xx(s) 
   := \sum_{j\in\idxft} \, \cf{\xx}{j} \Bsp{s}{\hh t_{j:j+\df+1}+\xx},    
   \quad 
   s \in \hh[\kft_0,\kft_\nkft] + \xx.
\end{equation}
\label{def:bsiac_filter}
\end{definition}

\def\fwg{.3\textwidth}
\def\fw{.3\textwidth}
\begin{figure}[ht!]
\centering
 \begin{tabular}{ccc} 
  \includegraphics[width=\fwg,trim=100 50 100 50,clip]{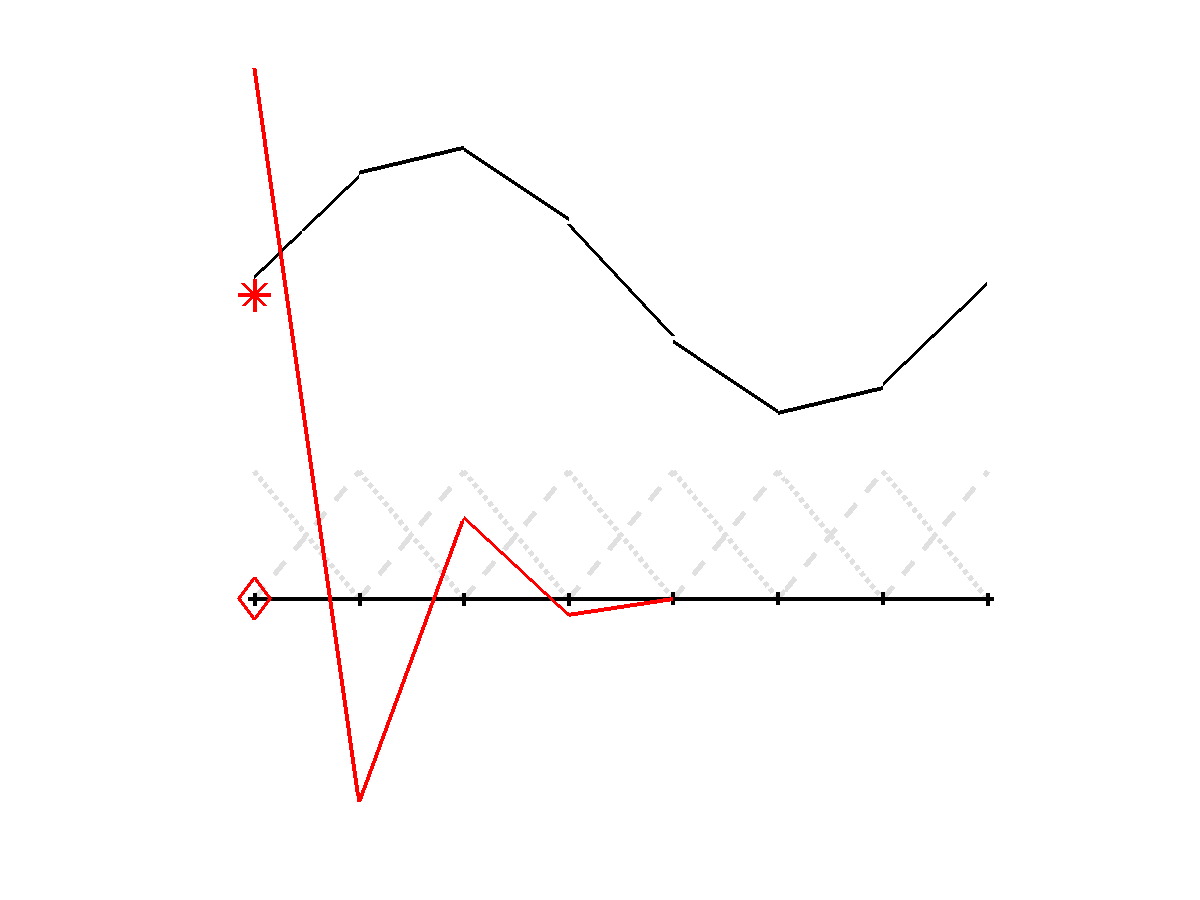}
 &
 \includegraphics[width=\fwg,trim=100 50 100 50,clip]{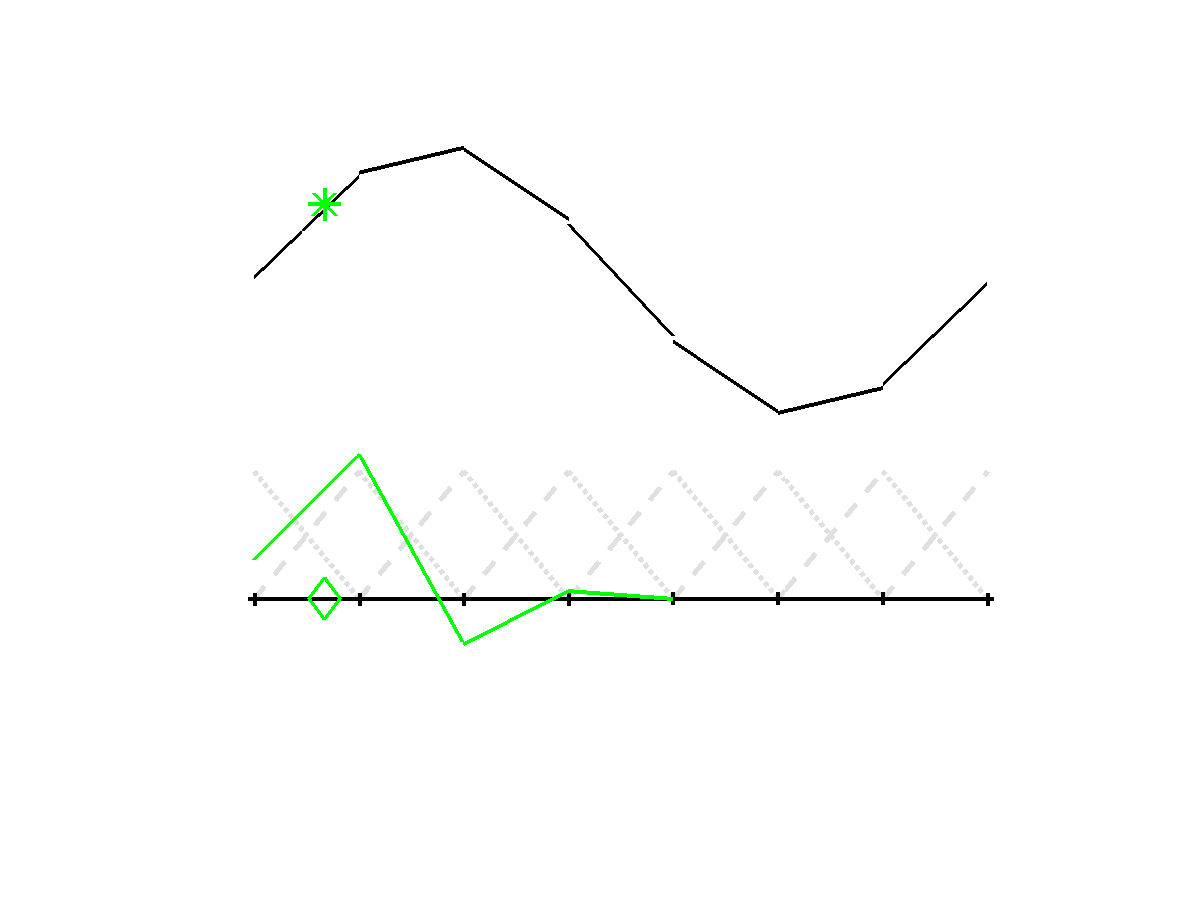}
 &
 \includegraphics[width=\fwg,trim=100 50 100 50,clip]{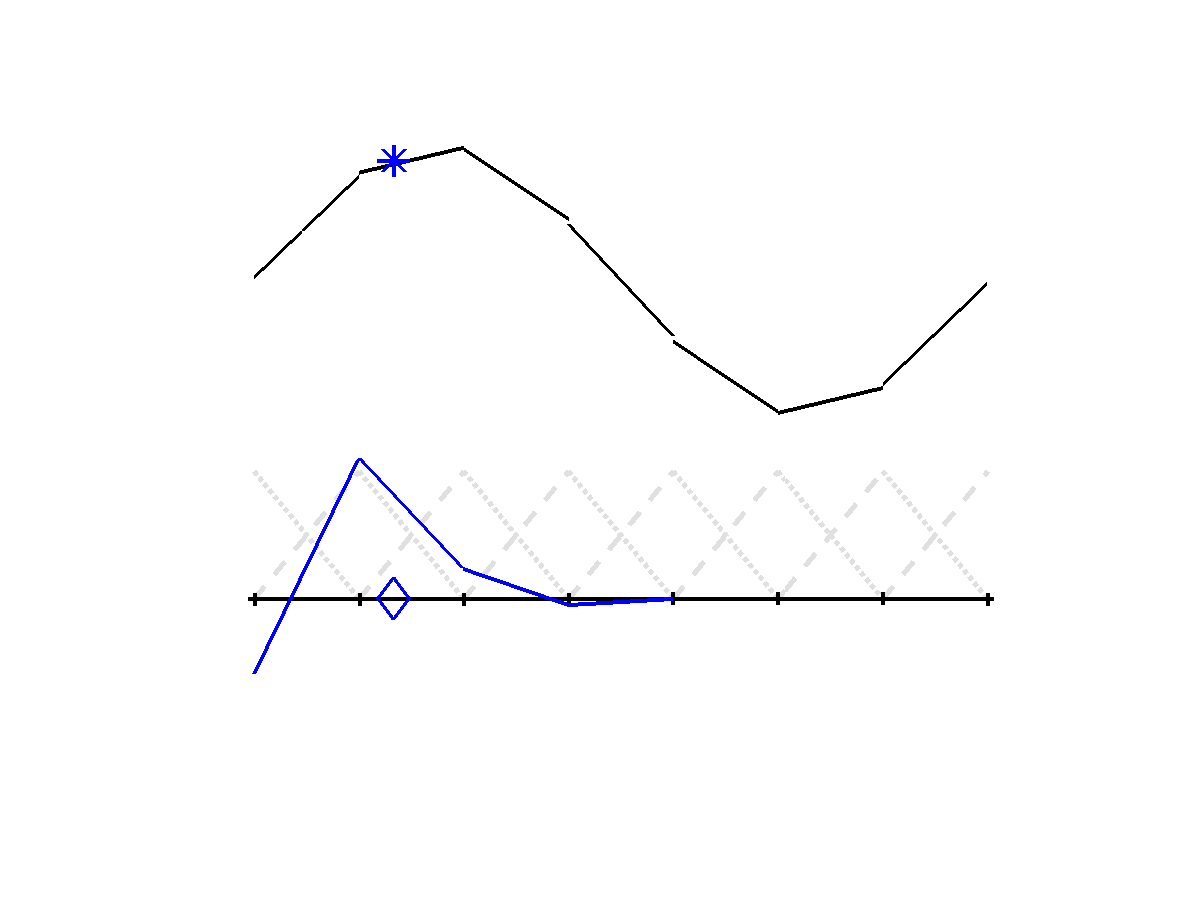}
 \\
 \includegraphics[width=\fw,trim=0 0 0 10,clip]{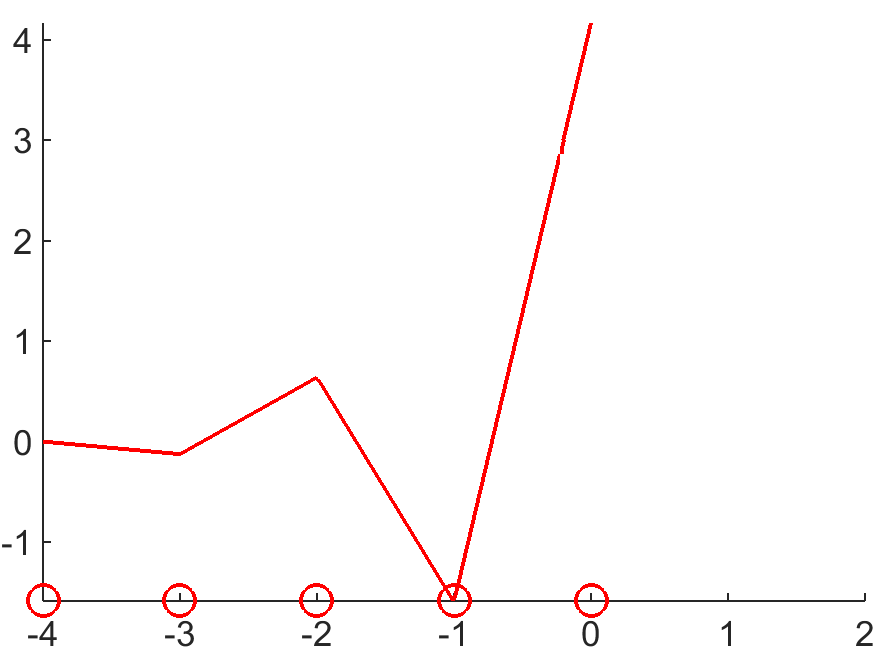}
 &
 \includegraphics[width=\fw,trim=0 0 0 10,clip]{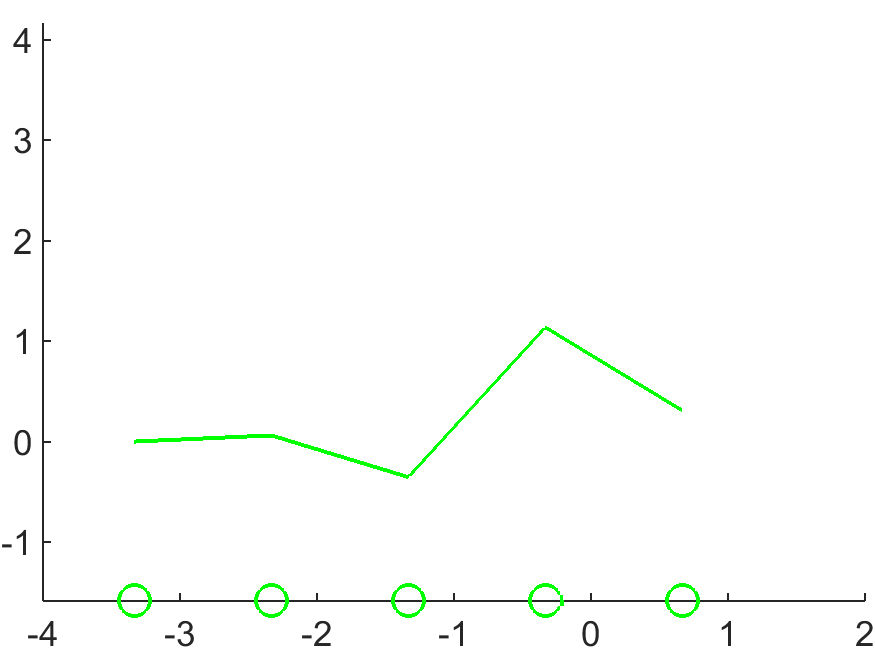}
 &
 \includegraphics[width=\fw,trim=0 0 0 10,clip]{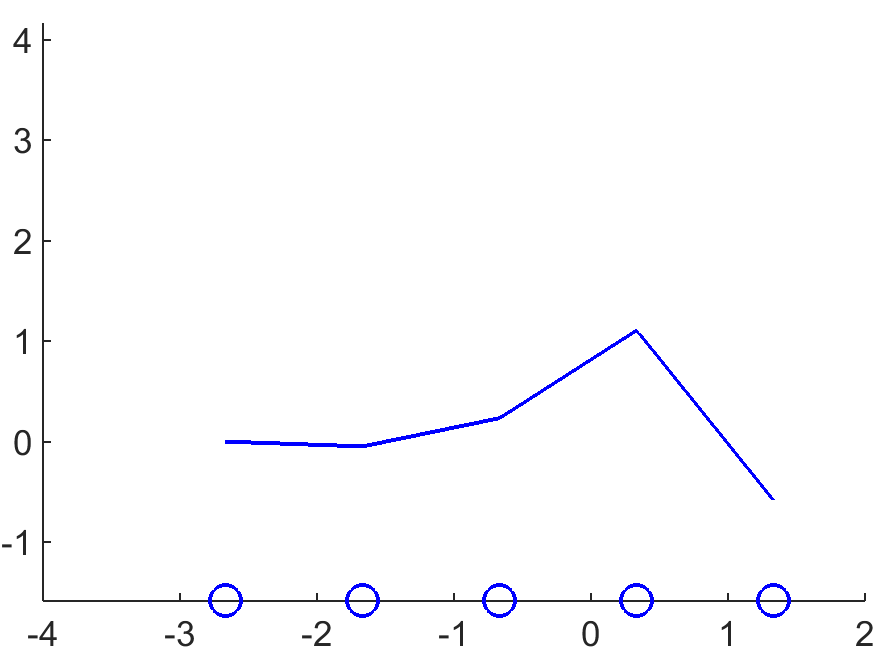}
 \end{tabular}
 \label{fig:psiac}
\caption{Position-dependent filtering at three locations $\xx$
(marked as a diamond) near the left endpoint 
(red diamond; denoted by $a$ in the text) of the piecwise linear
\DG\ output (note the small gaps between the segments).
(Top) The resulting convolved values are marked by a star.
(Bottom) The rightmost knot of the kernel is at location $\xx-a$.}  
\end{figure}

Here we introduced the scaling $\hh$ in anticipation of using a 
\proto\ knot sequence, 
typically
a subset of integers, to create one filter
and then apply its shifts and its scaled version
that can be cheaply computed as explained in
the previous section. That is,
the \DG\ output will be convolved with a \bsiac\ \filter\
$\flt{x-\hh \refx}(s)$ 
of \orderft{ }$\nc$, associated with an index sequence $\idxft$
and defined over the scaled and shifted  knot sequence 
$\hh \kft_{0:\nkft} + x -\hh \refx$ where the constant $\hh \refx$ adjusts
to the left or right boundary.
Substituting $x-\hh \refx$ for $x$ in Eq.~\eqref{eq:filterOverSK}, 
changing variable from $s$ to $x-s$
and recalling that $\nkft=\idxlst+\df+1$, we obtain an alternative spline 
representation of $\flt{x-\hh \refx}(s)$ with $x$-dependent coefficients
$\cfno{\idxlst-j} := \cf{x-\hh \refx}{\idxlst-j}$ 
over the shifted and reversed knot
sequence $\hh (\refx-\kft_{\nkft:0})$: 

\begin{equation} 
   \flt{x-\hh \refx}(s) 
   =
   \sum_{j\in\idxft} \, 
   \cfno{\idxlst-j} \, 
   \Bsp{x-s}{\hh \refx-\hh \kft_{\nkft-j:\idxlst-j}}, 
   \quad 
   s \in \hh[\kft_0,\kft_\nkft] + x -\hh \refx.
   \label{eq:filterOverSK1}
\end{equation}
Now consider
the \DG\ output $u(s,\tm)$ with break point sequence $s_{0:\nkdg}$
as in Eq.~\eqref{eq:DGoutput}.
Convolving $u(s,\tm)$ with $\flt{x-\hh \refx}(s)$ 
yields, after a change of variable $t :=x-s$, the 
\emph{filtered \DG\ output} 
\begin{align}
   \big(u*&\flt{\xx-\hh \refx}\big)(\xx)   \label{eq:convOnesided} \\
   &=\sum_{j\in\idxft} 
   \cfno{\idxlst-j} \, 
   \int_{\hh \kft_0+\xx-\hh \refx}^{\hh \kft_\nkft+\xx-\hh \refx} \dgout(\xx-s,\tm) \, 
   \Bsp{\xx-s}{\hh \refx-\hh \kft_{\nkft-j:\idxlst-j}} \, \drm s \notag
   \\
   &= \sum_{i\in\Ical,\,j\in\idxft}  \dgout_i(\tm) \, 
   \cfno{\idxlst-j} \, 
   \int_{\hh \refx-\hh \kft_\nkft}^{\hh \refx-\hh \kft_0} 
    \Nbf{i}{t}{\hh s_{0:\nkdg}} \, \Bsp{t}{\hh \refx-\hh \kft_{\nkft-j:\idxlst-j}} \, \drm 
t. \notag  
\end{align}
We note that, in the last expression, 
the integral no longer depends on $\xx$.
Rather, the coefficient $\cfno{\idxlst-j} := \cf{x-\hh \refx}{\idxlst-j}$ 
depends on $\xx$: by \thmref{thm:oneSidedCoefs}
$\cfno{\idxlst-j}$ is a polynomial in $\xx$.
This yields the following factored
representation of the convolution.

\begin{theorem} [Efficient \bsiac\ filtering of \DG\ output]
Let $\flt{x}(s)$ be a \bsiac\ \filter\ of \orderft{ } $\nc$
with index sequence $\idxft = (\idxfst,\ldots,\idxlst)$ 
and knot sequence $h\kft_{0:\nkft}+x-h\refx$.
Let 
$
   \dgout(x,\tm) 
   :=
   \sum_{i=0}^m \, \dgout_i(\tm) \, \Nbf{i}{x}{\hh s_{0:\nkdg}},
$
$x \in [a,b]$ and $\tm \geq 0$,
be the \DG\ output.
Let $\Ical$ be the set of indices of basis functions
$\Nbf{i}{.}{\hh \gs_{0:\nkdg}}$ 
with support overlapping
$\hh[\refx-\kft_{\nkft},\refx-\kft_0]$.
Then the \emph{filtered \DG\ approximation}
is a polynomial in $x$
of degree $\nc$:
\begin{align} \label{eq:symFilteredDG}
   \big(u*\flt{x}\big)(x) &= \ub_\Ical 
   \, 
   Q_{\refx}
   \,
   \diag( h^{-(0:\nc)} )
   \,
   \big( (x-\hh\refx)^{0:\nc} \big)^\tr.
   \\
   \ub_\Ical &:= [u_i(\tm)\big]_{i\in\Ical}, 
   \notag
   \\
   Q_{\refx} &:= 
   \ipmatl{}
   \,
	\adia
   \,
   M^{-1}_{0, \bkft,\idxft}
   \,
   \diag(\left[\begin{matrix}(-1)^\ell \, \binom{\ell+\df+1}{\ell} \end{matrix}
   \right]_{\ell=0:\nc}),
   \notag
   \\
   \ipmatl{} &:= \left[ \int_{\refx-\kft_{\lst}}^{\refx-\kft_0} \,
   \Nbf{i}{s}{\gs_{0:\nkdg}}  
   \, \Bsp{s}{\refx-\kft_{\nkft-j:\idxlst-j}} \,
   \drm s \right]_{i\in\Ical,\, j\in\idxft}, 
   \label{eq:Tcinvariant}   
\end{align} 
and $\adia$ the reversal matrix (1 on the antidiagonal and zero else).
\label{thm:symFilteredDG}
\end{theorem}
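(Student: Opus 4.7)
The plan is to assemble the factorization (\ref{eq:symFilteredDG}) directly from (\ref{eq:convOnesided}), which already puts the convolution in the pre-reversed form
\begin{equation*}
   (u*\flt{\xx-\hh\refx})(\xx) \;=\; \sum_{i\in\Ical,\, j\in\idxft} \dgout_i(\tm)\, \cfno{\idxlst-j}\, I_{ij},
\end{equation*}
where $I_{ij}$ is the integral on the last line of (\ref{eq:convOnesided}); by construction it depends on $i,j,\refx,\hh$ but not on $\xx$.

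\emph{Step 1 (strip $\hh$ from $I_{ij}$).} I would apply the change of variable $t=\hh\tilde{t}$ and combine (i) the B-spline scaling identity $\Bsp{\hh y}{\hh \kft_{i:i+\df+1}}=\hh^{-1}\Bsp{y}{\kft_{i:i+\df+1}}$ (which follows from $\int B=1$) with (ii) the basis scaling (\ref{eq:DGbasisfuns}). The two factors of $\hh$ cancel against the Jacobian, the integration endpoints rescale to $[\refx-\kft_\nkft,\refx-\kft_0]$, and what remains is exactly the $(i,j)$-entry of $\ipmatl{}$ in (\ref{eq:Tcinvariant}).

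\emph{Step 2 (matrix form with reversal).} Lemma~\ref{thm:siac} requires $|\idxft|=\nc+1$ so that $M_{0,\bkft,\idxft}$ is square and invertible. The column vector $[\cfno{\idxlst-j}]_{j\in\idxft}$ is therefore the entrywise reversal of $\mc_{\xx-\hh\refx}:=[\cf{\xx-\hh\refx}{\ell}]_{\ell\in\idxft}$, namely $\adia\,\mc_{\xx-\hh\refx}$. Collecting the double sum as a row-matrix-column product yields $(u*\flt{\xx-\hh\refx})(\xx) = \ub_\Ical\,\ipmatl{}\,\adia\,\mc_{\xx-\hh\refx}$.

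\emph{Step 3 (invoke Theorem~\ref{thm:oneSidedCoefs}).} Setting $\xi:=\xx-\hh\refx$, Theorem~\ref{thm:oneSidedCoefs} gives
$\mc_\xi = M^{-1}_{0,\bkft,\idxft}\,\diag([(-1)^\ell\binom{\ell+\df+1}{\ell}]_{\ell=0:\nc})\,\diag(\hh^{-(0:\nc)})\,(\xi^{0:\nc})^\tr$. Since the two diagonal factors commute, I can regroup the first four factors $\ipmatl{}\,\adia\,M^{-1}_{0,\bkft,\idxft}\,\diag([(-1)^\ell\binom{\ell+\df+1}{\ell}])$ into $Q_\refx$ and leave $\diag(\hh^{-(0:\nc)})\,(\xi^{0:\nc})^\tr$ outside, producing exactly (\ref{eq:symFilteredDG}). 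The degree-$\nc$ polynomiality in $\xx$ is then immediate: only the trailing monomial vector $((\xx-\hh\refx)^{0:\nc})^\tr$ depends on $\xx$.

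The main obstacle is Step~1 -- checking that the B-spline scaling, the DG-basis scaling (\ref{eq:DGbasisfuns}), the Jacobian of $t=\hh\tilde{t}$, and the rescaled endpoints all conspire so that every power of $\hh$ cancels inside the integral and one recovers $\ipmatl{}$ verbatim. A secondary worry is Step~2, where the index-reversal has to be threaded correctly so that $\adia$ is sandwiched between $\ipmatl{}$ and $M^{-1}_{0,\bkft,\idxft}$ and all matrix dimensions line up, which in turn relies on $|\idxft|=\nc+1$.
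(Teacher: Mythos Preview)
Your proposal is correct and follows essentially the same route as the paper's proof: start from (\ref{eq:convOnesided}), rescale the integral via $t=\hh s$ together with (\ref{eq:DGbasisfuns}) and the B-spline scaling to obtain $\ipmat_{\hh\refx}=\ipmatl{}$, then write the reversed coefficient vector as $\adia\,\mc_{\xx-\hh\refx}$ and plug in Theorem~\ref{thm:oneSidedCoefs}. The ``obstacles'' you flag are exactly the two computations the paper carries out, and your bookkeeping (the $\hh$ cancellation, the placement of $\adia$, and the reliance on $|\idxft|=\nc+1$) matches theirs.
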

\begin{proof}

Let $\cfno{\nc:0} := \cf{x-\hh \refx}{\nc:0}$ be
the position-dependent coefficients of the parametrized 
\kernel\ $\flt{x-\hh \refx}$ 
arranged in reverse order. We rewrite Eq.~\eqref{eq:convOnesided} as   
\begin{align}
\label{eq:uConvf}
   \big(u*&\flt{x-\hh \refx}\big)(x) =
   \ub_\Ical \, \ipmat_{\hh \refx} \, 
   \cfno{\nc:0},
   \\
   \ipmat_{\hh \refx} &:= \left[ \int_{\hh (\refx-\kft_{\lst})}^{\hh (\refx-\kft_0)} 
   \, \Nbf{i}{t}{\hh s_{0:\nkdg}} \,
   \Bsp{t}{\hh\refx-\hh \kft_{\nkft-j:\idxlst-j}} \, \drm t 
      \right]_{i\in\Ical,\,j\in\idxft}.
   \notag
\end{align}
Since    
$
\Nsp{hs}{h \cdot} = \Nsp{s}{\cdot}
$ by Eq.~\eqref{eq:DGbasisfuns},
$
\Bsp{hs}{\hh \refx-h\kft_{\nkft-j:\idxlst-j}} 
 =  \frac{1}{h} \Bsp{s}{\refx-\kft_{\nkft-j:\idxlst-j}}
$. Then the change of variable $s := t/h$ yields
\begin{align*}
 \ipmat_{\hh \refx}(i,j) &= 
 \int_{\refx-\kft_{\lst}}^{\refx-\kft_0} \,
 \Nbf{i}{hs}{h\gs_{i:i+\dDG+1}} \, 
 \Bsp{hs}{\hh \refx-h\kft_{\nkft-j:\idxlst-j}} \, h\,\drm s  
 = \ipmat_\refx(i,j). 
\end{align*}
By \thmref{thm:oneSidedCoefs}
\begin{align}
   \cfno{\nc:0}
 &=
\adia 
   \cfno{0:\nc}
\label{eq:fn0xc} \\
 &= 
\adia
\,
M^{-1}_{0,\kft_{0:\lst},\idxft}
\,
   \diag(\left[\begin{matrix}(-1)^\ell \, \binom{\ell+\df+1}{\ell} 
      \end{matrix}\right]_{\ell=0:\nc}) \, 
    \big( (x-\hh \refx)^{0:\nc} \big)^\tr. \notag 
\end{align} 
Eq.~\eqref{eq:uConvf} with Eq.~\eqref{eq:fn0xc} substituted 
equals Eq.~\eqref{eq:symFilteredDG}. 
\end{proof}
The factored representation implies that instead of recomputing the 
filter coefficients afresh for each point $\xx$ of the convolved output as 
in the established \oldMethod,
we simply compute the coefficients corresponding to one \proto\ knot sequence
$\bkft$, scale by $\hh$ as needed and at runtime pre-multiply with the data 
and post-multiply with the vector of shifted monomials as stated in 
Eq.\ \eqref{eq:symFilteredDG}.

Increased multiplicity of an inner knot of the \siac\ kernel reduces its
smoothness, and this, in turn, reduces the smoothness of the filtered output.
By contrast, \thmref{thm:symFilteredDG} shows that
when the \bsiac\ knots are shifted along evaluation points $\xx$ then
\bsiac\ convolution yields a polyonomial, i.e.\
infinite smoothness regardless of the knot multiplicity.
We may view position-dependent filtering as a form of polynomial
approximation. 

Additionally,
the polynomial characterization directly provides a symbolic expression for
the derivatives of the convolved \DG\ output.
\begin{corollary}[Derivatives of \bsiac-filtered \DG\ output]
\begin{equation} \label{eq:derivative}
   \frac{\drm^\ell}{\drm x^\ell}\big(u*\flt{x}\big)(x) = \ub_\Ical 
   \,    
   Q_\refx
   \,
   \diag( h^{-(0:\nc)} )
   \,
   \big( 
   \frac{\drm^\ell}{\drm x^\ell}
   (x-\hh\refx)^{0:\nc} \big)^\tr.
\end{equation}
\end{corollary}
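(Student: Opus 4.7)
The plan is to observe that the corollary is an immediate consequence of the explicit polynomial representation in \thmref{thm:symFilteredDG}. That theorem already establishes
\[
   \big(u*\flt{x}\big)(x) = \ub_\Ical \, Q_{\refx} \, \diag(h^{-(0:\nc)}) \, \big((x-\hh\refx)^{0:\nc}\big)^\tr,
\]
and the key point is that in this factored form only the rightmost factor depends on the evaluation point $x$: the coefficient vector $\ub_\Ical$ is determined by the \DG\ solution, the matrix $Q_\refx$ is assembled from the inner-product matrix $\ipmatl{}$, the precomputed inverse $M_{0,\bkft,\idxft}^{-1}$, the sign/binomial diagonal, and the reversal matrix $\adia$, and the diagonal $\diag(h^{-(0:\nc)})$ depends only on the mesh scale. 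None of these factors involves $x$.

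Therefore I would simply apply the linear operator $\drm^\ell/\drm x^\ell$ to both sides of the identity above, pull it past the $x$-independent factors on the left, and let it act componentwise on the monomial vector $(x-\hh\refx)^{0:\nc}$. This yields exactly \eqref{eq:derivative}. Because the composite expression is a polynomial of degree $\nc$ in $x$, every derivative is well-defined and the differentiation commutes trivially with the matrix products; no convergence or regularity issue arises.

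The only conceptual remark worth including in the write-up is that differentiating the convolution $(u*\flt{x})(x)$ is \emph{not} the same as convolving with $\drm^\ell f_x/\drm x^\ell$, because the \bsiac\ kernel $\flt{x}$ itself depends on $x$ through the shift of its knots. It is precisely the polynomial form granted by \thmref{thm:symFilteredDG} that lets us sidestep this subtlety: once the convolution has been collapsed into a polynomial in $x$, differentiation of the filtered output is reduced to differentiation of the scalar vector $((x-\hh\refx)^{0:\nc})^\tr$. There is no real obstacle; the one line to be careful about is that $\ell > \nc$ gives zero, consistent with the degree bound of the filtered output.
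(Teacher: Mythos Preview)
Your proposal is correct and follows exactly the paper's reasoning: the paper does not even give a separate proof of this corollary, stating only that ``the polynomial characterization directly provides a symbolic expression for the derivatives of the convolved \DG\ output,'' which is precisely your argument that only the rightmost monomial vector in \eqref{eq:symFilteredDG} depends on $x$. Your additional remark about why one cannot simply differentiate the kernel $\flt{x}$ directly is a worthwhile clarification that the paper omits.
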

 
The following Corollary shows that for
rational \DG\ break points and rational \kernel\ knots,
we can pre-compute and store the prototype matrix $Q_\refx$
stably in terms of integer fractions.

\begin{corollary}[Rational \bsiac\ convolution coefficients]
With the assumptions and notation of Theorem~\ref{thm:symFilteredDG},
if the basis functions $\Nbf{i}{.}{\gs_{0:\nkdg}}$ 
are piecewise polynomials with rational coefficients,
the shift $\refx$ is rational and  
the sequences $\kft_{0:\nkft}$ and $\gs_{0:\nkdg}$ are rational
then the matrix $Q_\refx$ has rational entries.
\end{corollary}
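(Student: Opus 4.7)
The strategy is to observe that $Q_\refx$ is a product of four matrices and to check rationality of each factor separately. The diagonal matrix $\diag([(-1)^\ell\binom{\ell+\df+1}{\ell}]_{\ell=0:\nc})$ has integer entries. The reversal matrix $\adia$ has $0/1$ entries. By Corollary~\ref{lem:rational} (applied via \lemref{lem:M0}: the entries of $M_{0,\bkft,\idxft}$ are polynomials in the rational knots $\kft_{0:\nkft}$, hence rational, and the inverse of a rational matrix is rational by Cramer's rule), $M^{-1}_{0,\bkft,\idxft}$ has rational entries. The entire task therefore reduces to showing that every entry of $\ipmatl{}$ is rational.

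The first step is to observe that the B-spline $\Bsp{s}{\refx-\kft_{\nkft-j:\idxlst-j}}$ has a knot sequence consisting of rational numbers, since $\refx$ and $\kft_{0:\nkft}$ are rational by assumption. Using the classical Cox--de~Boor recurrence (or equivalently the divided-difference definition \eqref{eq:defa}), each B-spline over a rational knot sequence is a piecewise polynomial in $s$ whose break points are the (rational) knots and whose polynomial coefficients on each piece are rational, because the recurrence only involves divisions by differences of knots, which are rational. Analogously, by hypothesis each $\Nbf{i}{s}{\gs_{0:\nkdg}}$ is piecewise polynomial with rational coefficients over the rational break points $\gs_{0:\nkdg}$.

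The second step is to combine these facts. The product $\Nbf{i}{s}{\gs_{0:\nkdg}}\,\Bsp{s}{\refx-\kft_{\nkft-j:\idxlst-j}}$ is piecewise polynomial with rational coefficients, and its break points (the union of the two knot sequences, shifted by the rational number $\refx$) are rational. The endpoints $\refx-\kft_{\lst}$ and $\refx-\kft_0$ of integration are also rational. Partitioning the integration interval at these break points, each piece contributes the integral of a polynomial with rational coefficients between rational limits, which is rational. A finite sum of rationals is rational, so $\ipmatl{}(i,j)\in\mathbb{Q}$, and consequently $Q_\refx$ has rational entries.

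The only step that requires any care is confirming that the B-spline, viewed as a piecewise polynomial, has rational coefficients; this is immediate from the Cox--de~Boor recurrence once one notes that all knot differences appearing in denominators are nonzero rationals. The remainder is bookkeeping: multiplication of matrices with rational entries preserves rationality, so $Q_\refx \in \mathbb{Q}^{|\Ical|\times(\nc+1)}$.
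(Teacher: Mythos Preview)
Your proof is correct and follows essentially the same approach as the paper: verify rationality of each factor of $Q_\refx$, with the main work being the argument that $\ipmatl{}$ has rational entries because the integrand is a piecewise polynomial with rational coefficients over intervals with rational endpoints. You are more explicit than the paper in two places---you spell out why the diagonal and reversal factors are trivially rational, and you justify via the Cox--de~Boor recurrence that a B-spline over rational knots is piecewise polynomial with rational coefficients---but the structure of the argument is the same.
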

\begin{proof} 
By \lemref{lem:M0}, the reproduction matrix
$M^{-1}_{0,\bkft,\idxft}$ has rational entries.
Since the integral of a polynomial with rational coefficients over an
interval with rational end points is rational,
Eq.~\eqref{eq:Tcinvariant} implies that 
$\ipmatl{}$ also has rational entries.
This implies that the entries of  $Q_\refx$ are rational.
\end{proof}

\subsection{Application to filtering at boundaries}
\label{sec:filteredDG:onesided}

\newcommand{\kfti}{\widehat{\kft}} 
\newcommand{\kftmax}{\nkft + \df + 1} 
\newcommand{\nkfti}{\nkft_0}
\newcommand{\matTi}[1]{\ipmatl{}^{(#1)}}
\newcommand{\bbfun}[2]{B_{#1}^{(#2)}}
\newcommand{\Tlam}[1]{ \ipmatl{#1} }

We now derive explicit forms of the matrix
$\ipmat_{\refx}$ of \thmref{thm:symFilteredDG} 
when the  \DG\ break points $s_{0:\nkdg}$ are
uniform and the \bsiac\ filters are one-sided. 
For one-sided filters, $\refx$ is replaced by $\refx_L$ and $\refx_R$ 
for the left-sided and right-sided \filter{}s respectively. 
Recalling that convolution reverses the direction of the filter \kernel\
(cf.\ \figref{fig:psiac}),
it is natural to assume that the right-most knot of the left boundary \filter{} 
vanishes when evaluating at the left endpoint $x=a$, i.e.\
$ (h\kft_\lst + x-h\refx_L)|_{x=a}=0$.
Together with the analoguous assumption for right boundary \filter{}s,
this determines 
\begin{equation} \label{eq:onesidedCon}
 \refx_L = \kft_\lst + \frac{a}{h} ,\qquad
 \refx_R =  \kft_0 + \frac{b}{h}.
\end{equation}

\begin{corollary} [\bsiac\ convolution coefficients for uniform
\DG\ knots]
Assume that the \DG\ break point sequence $\gs_{0:\nkdg}$ is uniform, 
 hence after scaling consists of
 consecutive integers.
Without loss of generality, the \DG\ output on each interval $[s_i,s_{i+1}]$ 
is defined in terms of Bernstein-B\'{e}zier polynomials $\bbfun{\ell}{i}$,
$\ell=0:d$, of degree $\dDG$, i.e.\ 
 \begin{equation*}
 \bbfun{\ell}{i}(x) :=
 \begin{cases}
 \binom{\ddg}{\ell} (x-s_i)^\ell (s_{i+1}-x)^{\ddg-\ell} \quad & \text{if $x \in [s_i,s_{i+1}]$}
 \\
 0 \quad &\text{otherwise.}
 \end{cases}
 \end{equation*}

Let $\nkfti$ be the smallest integer greater than or equal 
to $\kft_\lst-\kft_0$.
Then, for $i=0:(\nkfti-1)$, $\ell=0:\ddg$, and $j \in \idxft$
\begin{align}
 \Tlam{L}\big( (\ddg+1)i + \ell, j\big) &=
    \int_{i}^{i+1} \bbfun{\ell}{i}(t) \,  \Bsp{t}{ 
    \kft_{\nkft} - \kft_{\nkft - j:\idxlst-j)}
    } \,  \mathrm{d} t,
    \label{eq:Tleft}
    \\
\Tlam{R}\big( (\ddg+1)i + \ell, j\big) &=
    \int_{\nkfti - i}^{\nkfti - i+1} \bbfun{\ddg-\ell}{\nkfti-i}(t) \,  
    \Bsp{t}{ \kft_{j:j+\df+1}- \kft_0} \,  \mathrm{d} t.
    \notag  
\end{align}
\label{cor:Tlamb:uniform}
\end{corollary}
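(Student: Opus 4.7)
The plan is to substitute the explicit values $\refx_L = \kft_\lst + a/\hh$ and $\refx_R = \kft_0 + b/\hh$ from \eqref{eq:onesidedCon} into the defining integral \eqref{eq:Tcinvariant} of $\ipmatl{}$ in \thmref{thm:symFilteredDG}, and then exploit the single-interval support of the Bernstein-B\'{e}zier basis to reduce each entry of $\Tlam{L}$ and $\Tlam{R}$ to an integral over one unit knot interval.

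For the left boundary, I would first translate coordinates so that $a = 0$; this is harmless because only differences of knots appear in \eqref{eq:Tcinvariant}, so the matrix is translation invariant. Then $\refx_L = \kft_\nkft$, and the integration range $[\refx_L - \kft_\nkft, \refx_L - \kft_0]$ becomes $[0, \kft_\nkft - \kft_0]$, which is covered by the $\nkfti$ unit intervals $[i, i+1]$, $i = 0, \ldots, \nkfti - 1$. On each such interval the DG basis contains exactly $\ddg + 1$ Bernstein-B\'{e}zier polynomials $\bbfun{0}{i}, \ldots, \bbfun{\ddg}{i}$, so the row index $i \in \Ical$ linearizes as $(\ddg+1)i + \ell$. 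Since $\bbfun{\ell}{i}$ is supported only on $[i, i+1]$, the integral in \eqref{eq:Tcinvariant} collapses to $\int_i^{i+1}$, and substituting $\refx_L = \kft_\nkft$ into the B-spline knots yields \eqref{eq:Tleft} directly.

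For the right boundary, I would set $b = 0$ by translation, so that $\refx_R = \kft_0$ and the integration range is $[\kft_0 - \kft_\nkft, 0]$, and then apply the change of variable $u = -s$. The standard B-spline reflection identity $\Bsp{-u}{-\tau_\df, \ldots, -\tau_0} = \Bsp{u}{\tau_0, \ldots, \tau_\df}$ converts the knot subsequence $\kft_0 - \kft_{\nkft-j:\idxlst-j}$ into $\kft_{\idxlst-j:\nkft-j} - \kft_0$. Using $\nkft - \idxlst = \df + 1$ together with the canonical relabeling $j \leftrightarrow \idxlst - j$ that pairs the index sequences of the left- and right-boundary filters, this becomes $\kft_{j:j+\df+1} - \kft_0$. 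Symmetrically, the Bernstein reflection $\bbfun{\ell}{i}(-t) = \bbfun{\ddg-\ell}{\nkfti-i}(t)$ (after translating back by $\nkfti$) replaces the row-index pair $(i, \ell)$ by $(\nkfti-i, \ddg-\ell)$ and the integration bounds become $[\nkfti - i, \nkfti - i + 1]$, producing the stated expression for $\Tlam{R}$.

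The main obstacle is the bookkeeping in the right-boundary case: one must check that the two simultaneous reflections—on the B-spline column index $j$ and on the DG row pair $(i, \ell)$—compose consistently with $\nkft = \idxlst + \df + 1$ and with $\refx_R = \kft_0$ to reproduce exactly the claimed formula. Once this is verified, the remainder is direct substitution into \eqref{eq:Tcinvariant} and the elementary fact that each Bernstein-B\'{e}zier basis function is supported on a single unit interval.
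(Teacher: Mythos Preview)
Your plan is correct and follows essentially the same route as the paper: translating so that $a=0$ (resp.\ $b=0$) is exactly the paper's change of variable $t = s - \refx_L + \kft_\nkft$ (resp.\ $t = -(s-\refx_R+\kft_0)$), after which both arguments invoke the single-interval support of the Bernstein--B\'ezier basis and, for $\Tlam{R}$, the B-spline reflection identity. Your explicit mention of the relabeling $j \leftrightarrow \idxlst - j$ (using $\nkft - \idxlst = \df+1$) is in fact a point the paper's proof passes over rather quickly in going from $\Bsp{-t}{\kft_0-\kft_{\nkft-j:\idxlst-j}}$ to $\Bsp{t}{\kft_{j:j+\df+1}-\kft_0}$, so your bookkeeping there is a welcome clarification rather than a deviation.
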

\begin{proof}
\noindent First we consider $\Tlam{L}$.
In Eq.~\eqref{eq:Tcinvariant}, we change to the variable 
$t = s - \refx_L + \kft_\lst$. 
Abbreviating $\kfti_j :=  \kft_\nkft - \kft_{\nkft-j}$,
since the B-splines are translation invariant,
for 
$0 \leq \kfti_j = \kft_\lst-\kft_{\lst-j} \leq \kft_\lst-\kft_0 \leq \nkfti$,
\begin{equation}
    \Tlam{L}(i,j) = \int_{0}^{\kft_\lst-\kft_0} \, 
    \Nbf{i}{t}{\gs_{0:\nkdg}-\refx_L+\kft_\lst} 
   \, \Bsp{t}{
   \kfti_{j:j+\df+1}
   } \, \drm t.
   \label{eq:Tcinvariant1}   
\end{equation}
%
Since $\gs_0 = \frac{a}{h}$, Eq.~\eqref{eq:onesidedCon} implies that
the first point of the sequence of translated \DG\ break points
$\gs_{0:\nkdg}-\refx_L+\kft_\lst$ equals $0$, i.e.,
$\gs_0 - \refx_L + \kft_\lst = 0$. 
Since the break points are consecutive integers starting from 
$0$ and
$0 \leq \kfti_j \leq \nkfti$
the relevant \DG\ break points are 
$0:\nkft_0$.
%
We re-index the basis functions 
$\Nbf{i}{s}{\gs_{0:\nkdg}-\refx_L+\kft_\lst}$ in terms of 
the Bernstein-B\'{e}zier basis functions $\bbfun{\ell}{i}$.
Since $\bbfun{\ell}{i}$ are non-zero on any interval $[i,i+1]$,
Eq.~\eqref{eq:Tleft} for $\Tlam{L}$ follows from Eq.~\eqref{eq:Tcinvariant1}. 

Now consider $\Tlam{R}$. The change of variable 
$t :=-(s-\refx_R+\kft_0)$ together with the translation
invariance of B-splines imply that
\begin{align}
    \Tlam{R}(i,j) &= \int_{0}^{\kft_\nkft-\kft_0} \, 
    \Nbf{i}{-t}{\gs_{0:\nkdg}-\refx_R+\kft_0} 
   \, \Bsp{-t}{
   \kft_0-\kft_{\nkft-j:\idxlst-j}
   } \, \drm t.
   \notag
   \\
    &= \int_{0}^{\kft_\nkft-\kft_0} \, 
    \Nbf{m-i}{t}{\refx_R-\kft_0-\gs_{\nkdg:0}} 
   \, \Bsp{t}{
   \kft_{j:j+\df+1}-\kft_0
   } \, \drm t.   
   \label{eq:Tcinvariant2}  
\end{align}
Because of Eq.~\eqref{eq:onesidedCon} and $\gs_\nkdg=\frac{b}{h}$,
$\refx_R-\kft_0-\gs_\nkdg=0$. Eq.~\eqref{eq:Tleft} is derived by re-indexing
the basis functions $\phi_{m-i}$ 
as for the left boundary except that we have to count the functions
$\bbfun{\ell}{i}$ backward from the last interval 
$[\kft_{\nkfti-1},\kft_{\nkfti}]$
to the first one $[\kft_{0},\kft_{1}]$ when $i$ and $j$ increase. 
\end{proof}  

The assumption that the \DG\ output is represented in Bernstein-B\'ezier form,
represents no restriction:
the \DG\ output can be represented in any piecewise polynomial form
to arrive at similar formulas.

\section{\bsiac\ \filter s} \label{sec:someFilters}
\newcommand{\nkSR}{\mu}
\newcommand{\lambSR}[1]{\refx_{#1,d}}
\newcommand{\kftv}[1]{{\boldsymbol \kft}_{#1,d}}
\newcommand{\Tstar}{T^*}
\newcommand{\massbb}{\mathcal{M}_{d}}
\newcommand{\Tlr}[1]{\ipmat_{\lambSR{#1}}}
In this section, we first
restate the SRV and the RLKV filters as \bsiac\
filters and compare convolution using the 
\oldMethod\ to using the \newMethod.
Second, to illustrate the generality of the setup,
a new multiple-knot linear filter is defined and 
compared to the SRV and the RLKV filters.

\subsection{An alternative view of the published boundary filters}
We now recast several published boundary filters as special cases of
\bsiac\ \filter s defined 
over shifted knots in the sense of Theorem~\ref{thm:symFilteredDG}. 
The published filters considered in this section 
all chose $\ddg = \df$, i.e.\ the filter has the same degree 
as the \DG\ output. 

\noindent
The boundary SIAC filters RS \cite{siac2003} 
and SRV \cite{siac2011} of reproduction degree $\nc$ 
are both defined over the following shifted knots:
\begin{align} 
&\nkSR := \frac{\nc+d+1}{2}, \quad \lambSR{L}:= a+\nkSR,
\quad \lambSR{R}:= b-\nkSR\notag
\\
 &\kftv{*}(x) := \Big( -\nkSR,-\nkSR+1,\ldots,\nkSR \Big) + x-\lambSR{*}. \label{eq:RSknots}
\end{align}
Here $L$ or $R$ are substituted for $*$, we obtain the left-side
and the right-side filters respectively. 
Both types of filter are associated with a consecutive index sequence $\idxft$. 
However, the two \filter s have different reproduction degrees:
$\nc(\text{RS})=2d$ and $\nc(\text{RV})=4d$. 
The \proto\ knot sequences $\kftv{*}(\lambSR{*})$ are
chosen symmetrically supported about the origin.

In \propref{prop:srvT} below we denote as $\bb_k$ the vectors of 
Bernstein-B\'{e}zier coefficients of the $d+1$ polynomial pieces 
of the uniform B-spline of degree $d$ defined over the knots $0:(d + 1)$.
For example,
\begin{align}
 &d=1: \quad \left[\bb_0 \, \bb_1\right] = 
 \left[ \begin{smallmatrix}
         0 & 1
         \\
         1 & 0
        \end{smallmatrix}
 \right] \notag
 \\
 &d=2: \quad \left[\bb_0 \, \bb_1 \, \bb_2\right] = \frac{1}{2}
 \left[ \begin{smallmatrix}
         0 & 1 & 1 
         \\
         0 & 2 & 0
         \\
         1 & 1 & 0
        \end{smallmatrix}
 \right] 
 \\
 &d=3: \quad \left[\bb_0 \, \bb_1 \, \bb_2 \, \bb_3\right] = \frac{1}{6}
 \left[ \begin{smallmatrix}
         0 & 1 & 4 & 1
         \\
         0 & 2 & 4 & 0
         \\
         0 & 4 & 2 & 0
         \\
         1 & 4 & 1 & 0
        \end{smallmatrix}
 \right].  \notag
\end{align}

\begin{prop} \label{prop:srvT}
Let $\bb_k$ denote the Bernstein-B\'{e}zier coefficients of polynomial pieces 
of a uniform B-spline of degree $d$ defined over the knots $0:(d + 1)$ 
and $\massbb$ the matrix with entries
$\massbb(\ell,j) :=  \frac{1}{2d+1}\binom{d}{\ell} \binom{d}{j} 
\binom{2d}{\ell+j}^{-1}$, $\ell,j=0:d$. Then
  \begin{align}
   \Tlam{L}  := \Tlr{L} 
   = \left[ \begin{smallmatrix}
   \massbb \bb_0 & 0 & \cdots & 0 
   \\
   \massbb \bb_1 & \massbb \bb_0 & \cdots & 0
   \\
   \vdots & \vdots & \cdots & \vdots 
   \\
   \massbb \bb_d & \massbb \bb_{d-1} & \cdots & 0 
   \\
   0 & \massbb \bb_{d}  & \cdots & 0
   \\
   \vdots & \vdots & \cdots & \vdots
   \\
   0 & 0 & \cdots & \massbb \bb_0
   \\
   \vdots & \vdots & \cdots & \vdots
   \\
   0 & 0 & \cdots & \massbb \bb_d
           \end{smallmatrix}        
           \right]
           \label{eq:srvT}
\end{align}
and $\Tlam{R}$ is obtained from $\Tlam{L}$
by reversing the order of the columns and of the rows.
\end{prop}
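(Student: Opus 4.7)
The plan is to specialize Corollary \ref{cor:Tlamb:uniform} (the formula for $\Tlam{L}$ in the uniform-\DG\ case) to the symmetric knot sequence $\kftv{*}$ of \eqref{eq:RSknots}, and then reduce the resulting integrals to a single Gram matrix against the Bernstein--B\'ezier coefficients of a uniform B-spline. First I would substitute the knots $t_\ell=-\nkSR+\ell$ into the shifted knot sequence appearing inside the B-spline in \eqref{eq:Tleft}. A short computation gives $\kft_\nkft-\kft_{\nkft-j}=j$, so $\kft_\nkft-\kft_{\nkft-j:\idxlst-j}$ is the reversed sequence $(j,j+1,\dots,j+d+1)$, i.e.\ after re-ordering simply $j:j+d+1$. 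Hence the B-spline in column $j$ of $\Tlam{L}$ is the translate $B(\cdot\,|\,0:(d+1))(\cdot - j)$ of the canonical uniform B-spline of degree $d$.

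Second, I would restrict that B-spline to the integer interval $[i,i+1]$ appearing in \eqref{eq:Tleft}. The translate's piece on $[i,i+1]$ is precisely the $(i-j)$-th polynomial piece of the canonical uniform B-spline, which by definition has Bernstein--B\'ezier coefficients $\bb_{i-j}$ on $[i,i+1]$ provided $0\le i-j\le d$; otherwise the B-spline vanishes on $[i,i+1]$, producing the zero blocks of \eqref{eq:srvT}. Writing the B-spline piece as $\sum_{p=0}^d (\bb_{i-j})_p\,\bbfun{p}{i}$ and plugging into \eqref{eq:Tleft} reduces the integral to a linear combination of Bernstein inner products $\int_i^{i+1}\bbfun{\ell}{i}(t)\bbfun{p}{i}(t)\,\drm t$.

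Third, I would evaluate these Bernstein inner products with the standard beta-function identity
\begin{equation*}
  \int_i^{i+1}\bbfun{\ell}{i}(t)\,\bbfun{p}{i}(t)\,\drm t
  = \binom{d}{\ell}\binom{d}{p}\binom{2d}{\ell+p}^{-1}\frac{1}{2d+1}
  = \massbb(\ell,p),
\end{equation*}
which shows that the $(\ell,j)$-entry of block row $i$ of $\Tlam{L}$ equals the $\ell$-th component of $\massbb\,\bb_{i-j}$ when $0\le i-j\le d$ and vanishes otherwise. Assembling the block rows indexed by $i=0,1,\dots,\nkfti-1$ and block columns $j=0:\nc$ produces exactly the lower block-Toeplitz structure displayed in \eqref{eq:srvT}. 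Finally, $\Tlam{R}$ is obtained by the analogous calculation using the second formula of Corollary \ref{cor:Tlamb:uniform}; because of the reversal $t\mapsto-(s-\refx_R+\kft_0)$ and the fact that for the symmetric \proto\ knot sequence $\kftv{*}$ the right-sided knots are the mirror image of the left-sided ones, the block structure of $\Tlam{R}$ is $\Tlam{L}$ with columns and rows reversed, as claimed.

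The main obstacle is index bookkeeping: carefully tracking how the reversal $\kft_\nkft-\kft_{\nkft-j:\idxlst-j}$ turns into the ascending uniform sequence $j:j+d+1$, and identifying which polynomial piece of the canonical uniform B-spline lives on $[i,i+1]$, so that the correct coefficient vector $\bb_{i-j}$ is selected. Once these reductions are in place, the block-Toeplitz structure and the zero blocks follow immediately from the support of the uniform B-spline.
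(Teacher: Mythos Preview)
Your proposal is correct and follows essentially the same route as the paper: specialize Corollary~\ref{cor:Tlamb:uniform} to the uniform shifted knots so that the B-spline in column $j$ is $B(\cdot\mid j{:}j{+}d{+}1)$, expand its restriction to $[i,i{+}1]$ in the Bernstein basis with coefficient vector $\bb_{i-j}$ (the paper writes $\bb_\rho$ with $\rho=i-j$), and evaluate the resulting Bernstein inner products via the beta integral to obtain $\massbb$. The treatment of $\Tlam{R}$ by reversal is also the paper's argument.
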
 
\begin{proof}
 Using the notation of  Corollary \ref{cor:Tlamb:uniform}, $n_0 = \nkft = \nc + \ddg$,
 since $\kfti_{0:\nkft}=\kft_\nkft -\kft_{\nkft:0} =  0..(\nc+d)$, 
 \begin{equation}
  \Tlam{L}\big( (d+1)i + \ell, j\big) =
    \int_{i}^{i+1} \bbfun{\ell}{i}(t) \,  \Bsp{t}{ 
    j:j+d+1
    } \,  \mathrm{d} t,
    \label{eq:Tleft1}
 \end{equation}
When $i$ and $\ell$ vary, Eq.~\eqref{eq:Tleft1} gives the $j$th column of 
$\Tlam{L}$. The support of $\Bsp{t}{j:j+d+1}$ contains $d+1$ intervals 
$[j+\rho,j+\rho+1]$, $\rho=0:d$. 
When $[i,i+1] \equiv [j+\rho,j+\rho+1]$, i.e.~when $i=j+\rho$, we can rewrite 
Eq.~\eqref{eq:Tleft1} as 
\begin{align}
   \left[ \Tlam{L}\big( (d+1)i + \ell, j\big) 
 \right]_{\ell=0:d} &=
    \int_{i}^{i+1} \left[\bbfun{\ell}{i}(t) \right]^\tr_{\ell=0:d}
    \,  
    \left[ \bbfun{j}{i}(t) \right]_{j=0:d} \bb_\rho
    \,  \mathrm{d} t,
    \\
    &=
    \left[ \int_0^1 \bbfun{\ell}{0}(t) \, \bbfun{j}{0}(t) \, \drm t 
\right]_{\ell=0:d,j=0:d} \bb_\rho  
    =
    \massbb \bb_\rho,
    \notag
\end{align}
where 
\begin{align}
   \massbb(\ell,j) = \binom{d}{\ell} \, \binom{d}{j} 
   \, \binom{2d}{\ell+j}^{-1} \,
 \int_0^1 \binom{2d}{\ell+j} x^{\ell+j} (1-x)^{2d-\ell-j}\, \drm t.
 \label{eq:Mdkl}
\end{align}
Since the integral in Eq.~\eqref{eq:Mdkl} equals $\frac{1}{2d+1}$, we have 
derived the formula for $\massbb(\ell,j)$ in Eq.~\eqref{eq:srvT}.
The formula for $\Tlam{L}$ follows,
because the entries of the $j$th column vanish when the two intervals
$[i,i+1]$ and $[j+\rho,j+\rho+1]$ do not overlap. 

The formula for $\Tlam{R}$ in Eq.~\eqref{eq:Tleft} is that of 
$\Tlam{L}$ except that the B-splines appear in reverse order.
Therefore, reversing the column order of $\Tlam{L}$ 
and then the row order of the result yields $\Tlam{R}$.
 \end{proof}

\noindent
The index sequence $\idxft$ of the boundary \filter\ RLKV is non-consecutive. 
The left and right \filter s are of degree $2\ddg + 1$ 
and are defined over the following shifted knots
\begin{align} 
 \kftv{L}(x) &:= \Big( -\nkSR,\ldots,\nkSR-1,
 \underbrace{\nkSR,\ldots,\nkSR}_{\text{$\ddg+1$ times}} \Big) + x-\lambSR{L},
 \label{eq:RLKVknotsL}
 \\
 \idxft_L &:= \{1:(2\ddg+1),3\ddg+1\};
 \notag
 \\
\kftv{R}(x) &:= \Big(
 \underbrace{-\nkSR,\ldots,-\nkSR}_{\text{$\ddg+1$ times}},
  -\nkSR+1,\ldots,\nkSR,
 \Big) + x-\lambSR{R}, \label{eq:RLKVknotsR} 
  \\
 \idxft_R &:= \{1,\ddg:(3\ddg+1)\}.
 \notag
\end{align}
The prototype knot sequence  
$\kftv{L}(\lambSR{L})$ is chosen to have symmetric 
 support about the origin. 
 \begin{prop} \label{prop:rlkvT}
Let $\massbb$ be defined as in Proposition~\ref{prop:srvT} 
and let $\massbb^{(1)}$ denote its first column.
Then
  \begin{align}
   \Tlr{L} = \left[ \begin{smallmatrix}
   (d+1)\massbb^{(1)} & \massbb \bb_0 & 0 & \cdots & 0 
   \\
   0 & \massbb \bb_1 & \massbb \bb_0 & \cdots & 0
   \\
   \vdots & \vdots & \vdots & \cdots & \vdots 
   \\
   0 & \massbb \bb_d & \massbb \bb_{d-1} & \cdots & 0 
   \\
   0 & 0 & \massbb \bb_{d}  & \cdots & 0
   \\
   \vdots & \vdots & \vdots & \cdots & \vdots
   \\
   0 & 0 & 0 & \cdots & \massbb \bb_0
   \\
   \vdots & \vdots & \vdots & \cdots & \vdots
   \\
   0 & 0 & 0 & \cdots & \massbb \bb_d  
           \end{smallmatrix}        
           \right]
           \label{eq:rlkvT}     
  \end{align}
$\Tlam{R}$ is obtained from $\Tlam{L}$
by reversing the order of the columns and of the rows.
 \end{prop}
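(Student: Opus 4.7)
The plan is to follow the proof of Proposition~\ref{prop:srvT} almost verbatim, since Proposition~\ref{prop:rlkvT} differs from it only by a single extra column that encodes the additional B-spline enriching the RLKV spline space. The bulk of the matrix is therefore handled by the SRV argument, and only one new integral has to be computed.

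First I would apply Corollary~\ref{cor:Tlamb:uniform} column by column: for $j\in\idxft_L$,
\begin{align*}
 \Tlr{L}\bigl((d+1)i+\ell,\,j\bigr)=\int_i^{i+1}\bbfun{\ell}{i}(t)\,B\bigl(t\,\big|\,\kft_\nkft-\kft_{\nkft-j:\idxlst-j}\bigr)\,\mathrm{d}t.
\end{align*}
Because the only non-consecutive feature of $\kftv{L}(\lambSR{L})$ is the $(d+1)$-fold knot at $\nkSR=\kft_\nkft$, exactly one column has a reflected knot subsequence of the peaked form $(0,\dots,0,1)$ with $d+1$ zeros; every remaining column has a knot subsequence of $d+2$ consecutive integers, and therefore behaves identically to the SRV case.

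For the regular columns, the argument of Proposition~\ref{prop:srvT} transfers verbatim: on each integer interval inside the B-spline's support the integrand is a product of two degree-$d$ Bernstein polynomials whose integral over $[0,1]$ produces an entry of $\massbb$ via the beta integral, so the column inherits the shifted block pattern $[\dots,0,\massbb\bb_0,\dots,\massbb\bb_d,0,\dots]^\top$ of~\eqref{eq:rlkvT}. For the remaining (peaked) column, I would evaluate the integrand in closed form using the derivative branch of the divided-difference definition~\eqref{eq:divdif} at the repeated knot, combined with the paper's normalization $N=\tfrac{t_{i+d+1}-t_i}{d+1}B$; either route yields $B(t\mid 0,\dots,0,1)=(d+1)(1-t)^d$ on $[0,1]$. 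Integrating against the Bernstein basis then reduces to a single beta integral,
\begin{align*}
 \int_0^1\bbfun{\ell}{0}(t)\,(d+1)(1-t)^d\,\mathrm{d}t
 =(d+1)\binom{d}{\ell}\int_0^1 t^\ell(1-t)^{2d-\ell}\,\mathrm{d}t
 =(d+1)\,\massbb^{(1)}(\ell),
\end{align*}
which gives the top block of the first column of~\eqref{eq:rlkvT}; the single-interval support explains the zero blocks below. The formula for $\Tlr{R}$ then follows by the column- and row-reversal argument used at the end of the proof of Proposition~\ref{prop:srvT}.

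The main obstacle will be the bookkeeping at the $(d+1)$-fold repeated knot: the divided-difference definition must be applied consistently with the paper's $B$-versus-$N$ normalization in order to recover the prefactor $d+1$ in front of $(1-t)^d$; getting that normalization wrong would put the first column off by a factor of $d+1$ and obscure the clean relationship with $\massbb^{(1)}$. Once that closed form is verified, the remainder is a column-by-column copy of the SRV argument.
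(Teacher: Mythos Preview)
Your proposal is correct and mirrors the paper's own argument: the paper's proof of Proposition~\ref{prop:rlkvT} is the single sentence ``The proof is the same as that of Proposition~\ref{prop:srvT} except that the additional B-spline of the \kernel\ contributes the first column of \eqref{eq:rlkvT},'' which is precisely the strategy you outline. Your explicit beta-integral computation of the extra column, yielding $(d+1)\massbb^{(1)}$, supplies the detail the paper omits and is consistent with the displayed matrix.
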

 \begin{proof}
The proof is the same as that of Proposition~\ref{prop:srvT}
except that the additional B-spline of the \kernel\ contributes
the first column of \eqref{eq:rlkvT}. 
 \end{proof}

\subsection{A multiple-knot boundary \bsiac\  \kernel}
\label{sec:leastdeg}
Since their introduction in the seminal paper \cite{siac2003},
boundary \filter s have been given the same  degree as the symmetric kernel 
\cite{siac2014} -- 
perhaps to guarantee the same smoothness near the boundary 
as in domain interior.
Indeed, the authors of \cite{siac2011} numerically observed and predicted that 
SRV-filtered \DG\ outputs would be as smooth as the SRV \filter.
Theorem~\ref{thm:symFilteredDG} implies not only that a \bsiac\ \filter\ 
need not have the same degree as the symmetric \filter, but
confirms smoothness of the output beyond the conjecture:
it shows that \bsiac\ \filter s may have multiple knots without reducing the
the infinity smoothness of the filtered \DG\ output.

\newcommand{\knotsize}{\nkSR} 
We define a new \emph{multiple-knot left-sided \filter} $\sk_L$ and 
a right-sided \filter\ $\sk_R$ each of degree $\df=1$ (hence 
double-knot filters)  respectively over the knot sequences
\begin{align} 
   \knotsftv_L &:= x-\refx_L + \big(-\knotsize,\ldots,\knotsize-3,
   \knotsize-2,\knotsize-1,\knotsize-1,\knotsize,\knotsize \big), \
   \knotsize := \frac{3\ddg+1}{2},
   \notag
   \\
   \knotsftv_R &:= x-\refx_R + \big(-\knotsize,-\knotsize,-\knotsize+1,-\knotsize+1,
   -\knotsize+2,-\knotsize+3\ldots,\knotsize\big).
   \label{eq:LRknots}
\end{align}
Since the degree of the \filter s is $1$, their \orderft{} is $3d+2$
even though they have the same support size as the symmetric \siac\ \filter\
that one might apply in the interior of the $\DG$ domain.
The two double-knots 
increase the reproduction degree but not the support.

\begin{figure}[ht!]
\def\figw{.45\textwidth}
 \centering
 \begin{tabular}{cc}
  \includegraphics[width=\figw]{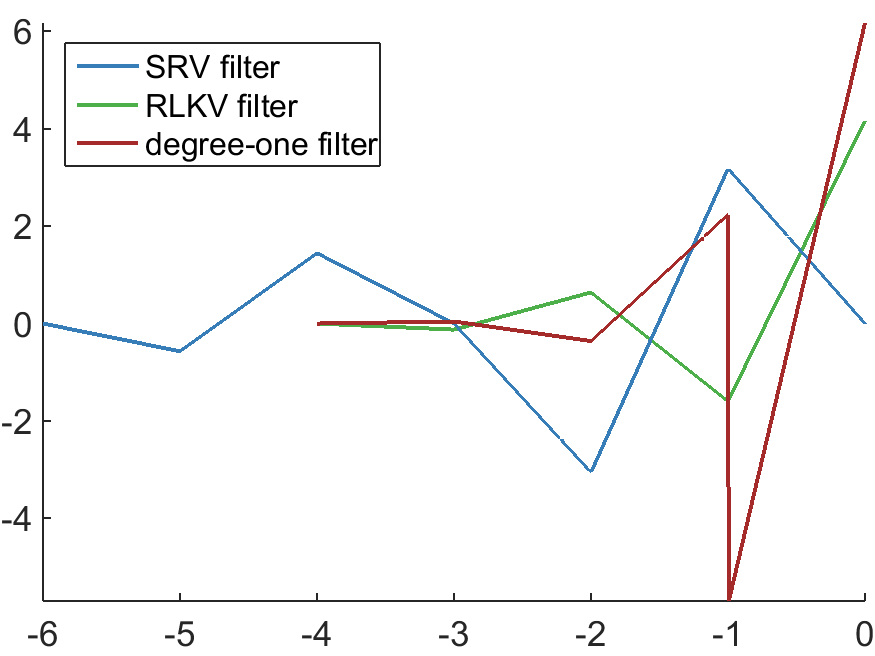}
  &
  \includegraphics[width=\figw]{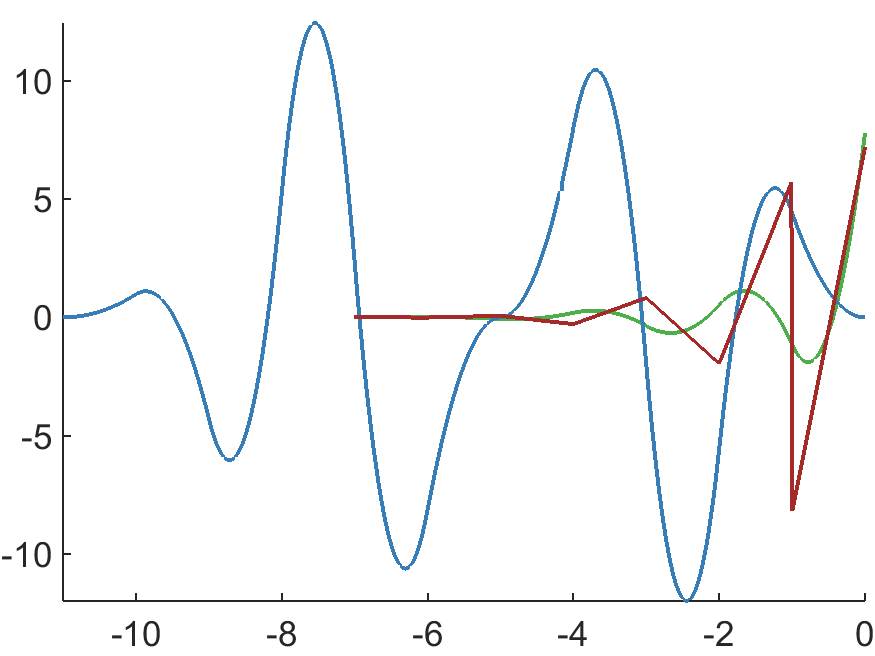}
  \\
  (a) \filter s for $\ddg=1$
  &
  (b) \filter s for $\ddg=2$
  \\
  \includegraphics[width=\figw]{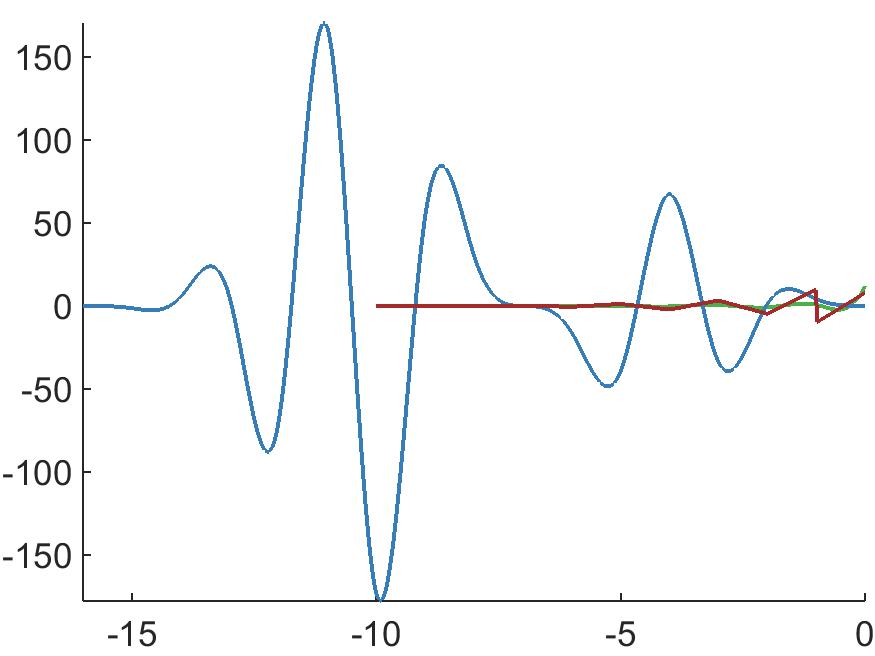}
  &
  \includegraphics[width=\figw]{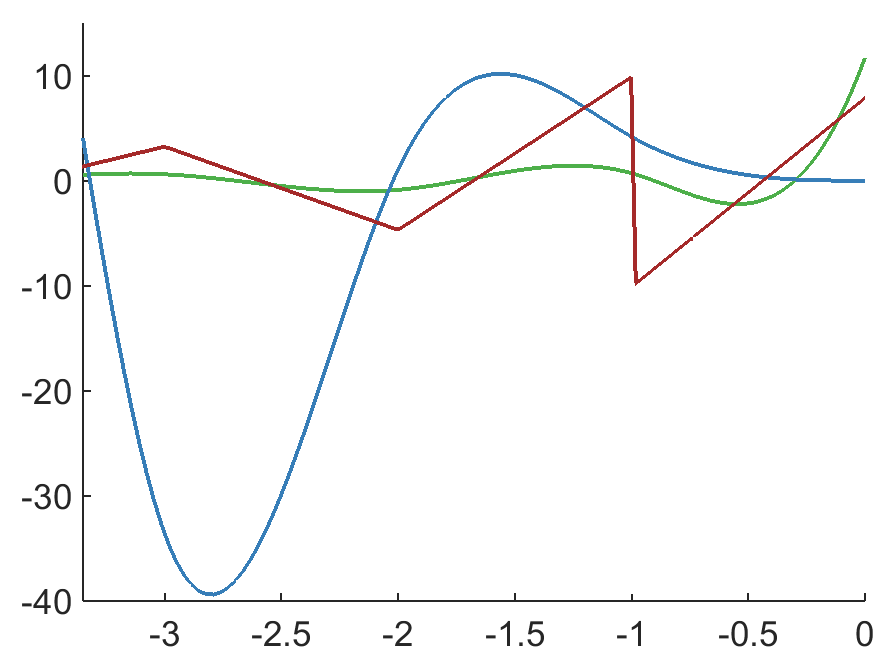}  
  \\
  (c) \filter s for $\ddg=3$
  &
  (c') right zoom of (c)
 \end{tabular}
\caption{Graphs of the three \filter s defined at the left boundary $x=a$.
Note that the degree of \cite{siac2011}  and \cite{siac2014} increases with
$\ddg$ while the degree of the multiple-knot kernel stays linear.}
\label{fig:filters}
\end{figure}  

\subsection{Examples} 
\label{sec:examples}  
\figref{fig:filters} graphs instances of the \cite{siac2011},
\cite{siac2014} 
and of the new multiple-knot \filter\ of degree-one.
With the help of the following two examples
we numerically verify the symbolic expression \eqref{eq:symFilteredDG}.
We demonstrate improved stability of the \newMethod{} over the \oldMethod\
and we illustrate a possible use of a \filter\ of 
degree $\df \ne \ddg$ (we will choose $\df=1<\ddg$) and 
illustrate a possible use of inner knots with higher multiplicity. 
Both examples are special cases of the canonical Eq.~\eqref{eq:hypEqs}
%
with
\begin{align}
   \kappa(x,\tm) \equiv 1,\quad
   \rho(x,\tm) \equiv 0,\quad
   0 \leq \tm \leq \tmend.
   \label{eq:kr}
\end{align}

\noindent
{\bf Example 1} Consider Eq.~\eqref{eq:hypEqs} with specialization
\eqref{eq:kr}, Dirichlet boundary conditions and  $\tmend:=\frac{1}{16}$.
The exact solution is  
$\dgout(x,\tmend) = \frac{7}{10}\sin(\pi \sqrt{\frac{10}{7}}(x-\tmend))$.

\noindent 
{\bf Example 2} Consider Eq.~\eqref{eq:hypEqs} with specialization
\eqref{eq:kr}, periodic boundary conditions and  $\tmend:=1$,
i.e.\ after a sequence of time steps.
The exact solution is $\dgout(x,\tmend) = \sin(2\pi(x-\tmend))$.
\begin{figure}[h!t] \footnotesize
\def\figw{.32\linewidth}
\begin{tabular}[c]{ccc}
  \subfigure[ Example 1 \DG\ $\ddg=2$ output error] { 
 \includegraphics[width=\figw]{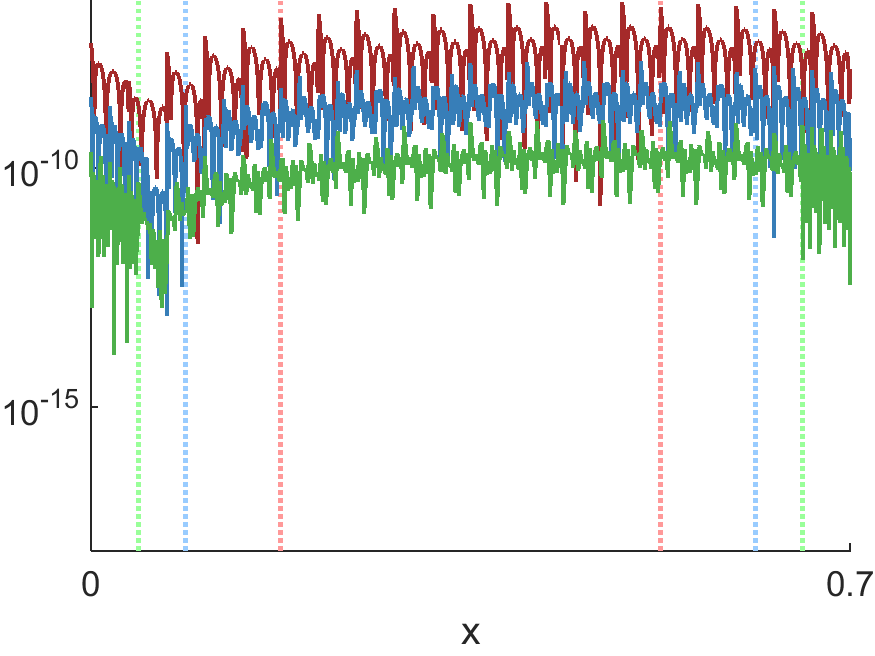}}
 &
  \subfigure[ \cite{siac2011} numerical]{
  \label{fig:oldM1}
  \includegraphics[width=\figw]{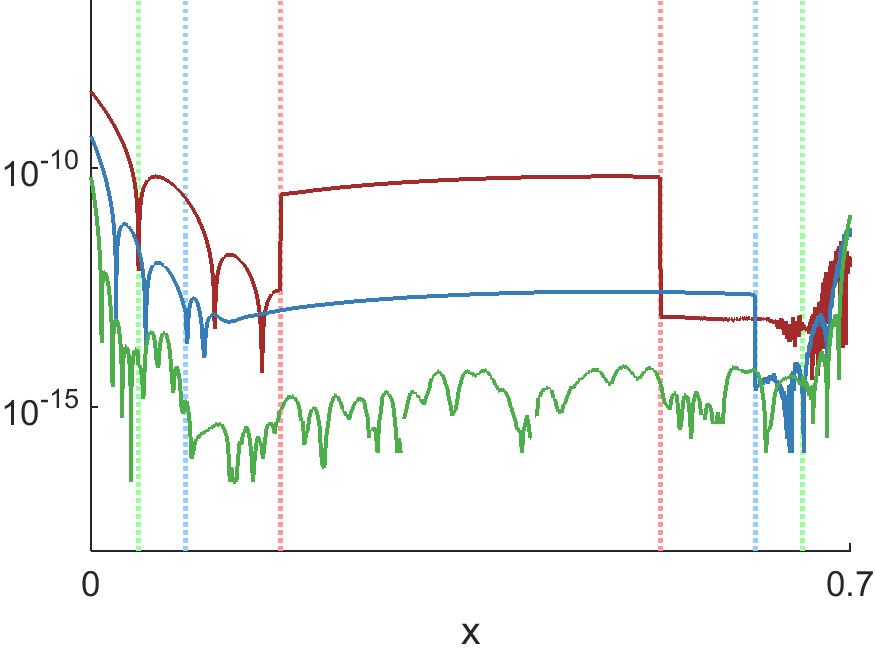}  }
 &
  \subfigure[ \cite{siac2011} symbolic]{
  \label{fig:newM1}
  \includegraphics[width=\figw]{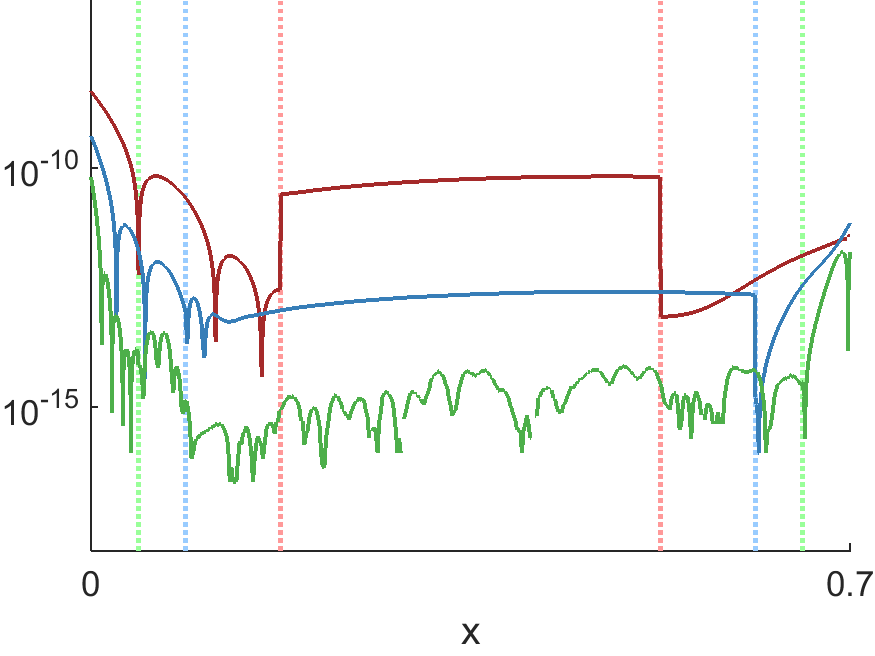}  }
  \\
 &
  \subfigure[ Left zoom of \ref{fig:oldM1}]{
  \includegraphics[width=\figw]{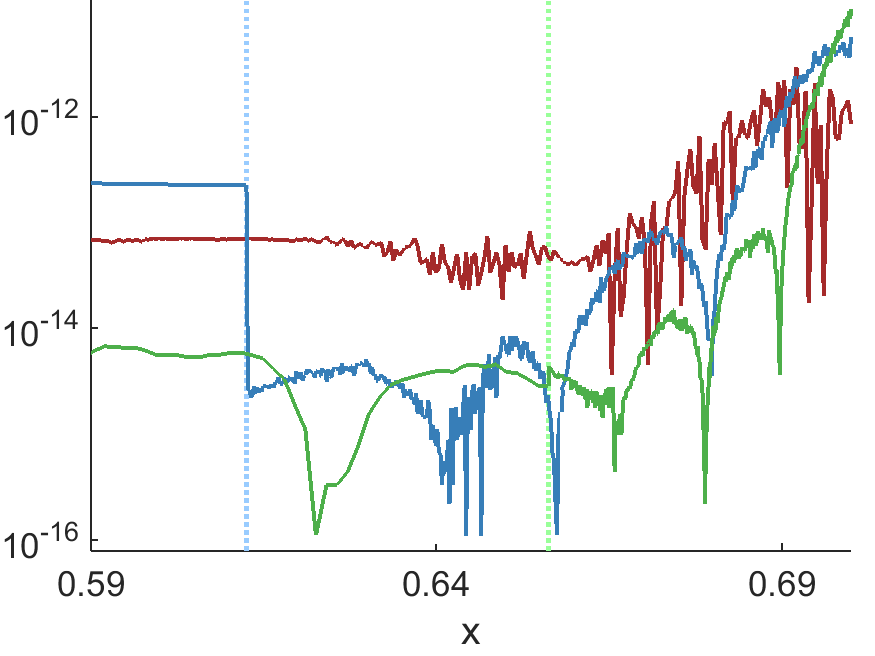}}
 &
  \subfigure[ Left zoom of \ref{fig:newM1}]{
  \includegraphics[width=\figw]{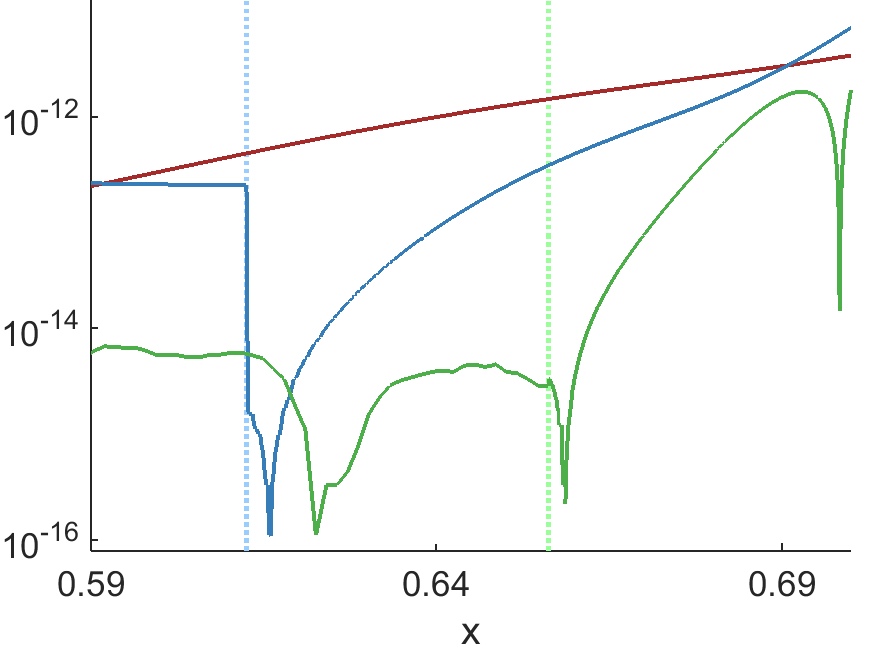}   }
  \\
  &
  \subfigure[\cite{siac2014}]{
 \includegraphics[width=\figw]{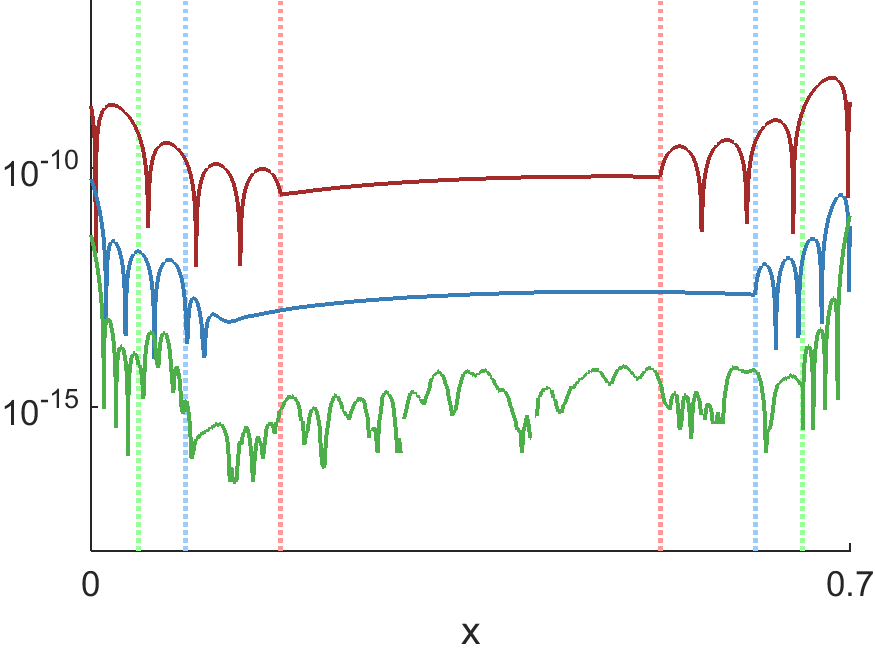}  }
  &
  \subfigure[new multiple-knot filter]{
  \includegraphics[width=\figw]{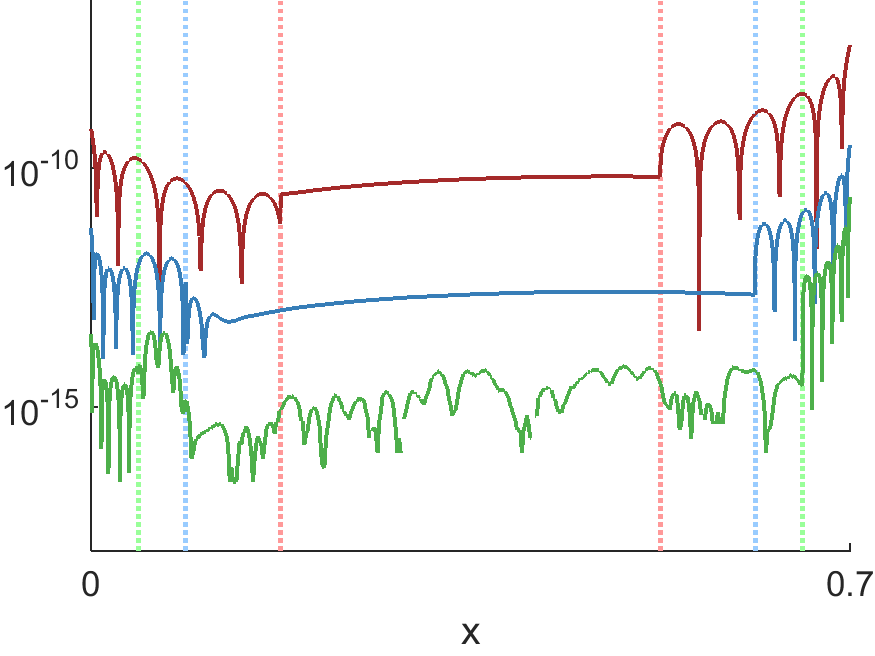}  }
\end{tabular}
\caption{Point-wise errors of \DG\ approximations of Example 1
when $\ddg=2$.
The three graphs in each subfigure correspond, from top to bottom,
(red, blue, green) to $\nkdg = 20, 40$ and $80$ \DG\ break points.
}
\label{fig:dDG2T1}
\end{figure}

\begin{figure}[h!t] \footnotesize
\def\figw{.32\linewidth}
\begin{tabular}[c]{ccc}
  \subfigure[Example 2 \DG\ $\ddg=3$ output error] { 
 \includegraphics[width=\figw]{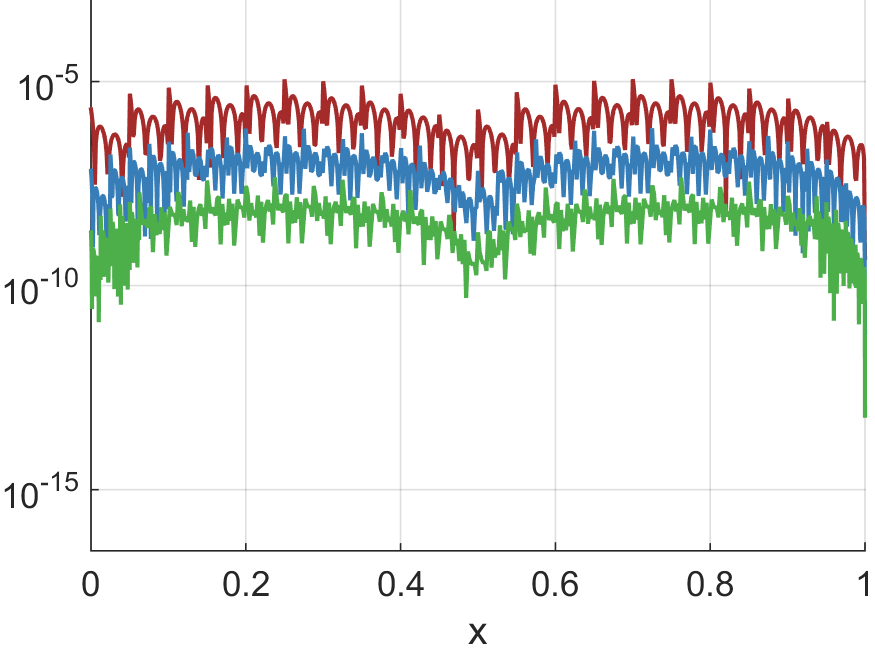}}
 &
  \subfigure[ \cite{siac2011} numerical]{
  \label{fig:oldM}
  \includegraphics[width=\figw]{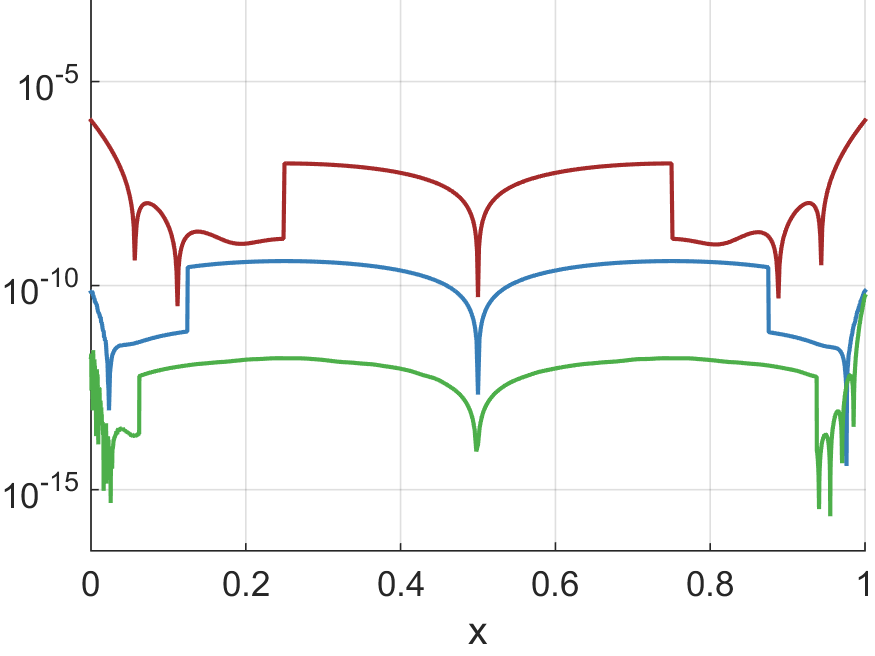}}
 &
  \subfigure[ \cite{siac2011} symbolic]{
  \label{fig:newM}
  \includegraphics[width=\figw]{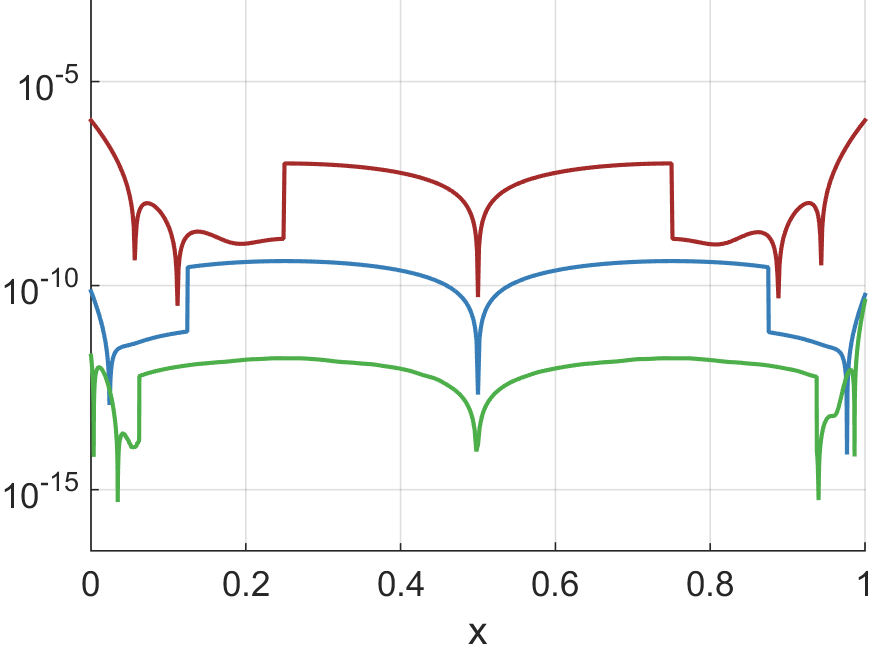}}
  \\
 &
  \subfigure[ Left zoom of \ref{fig:oldM}]{
  \includegraphics[width=\figw]{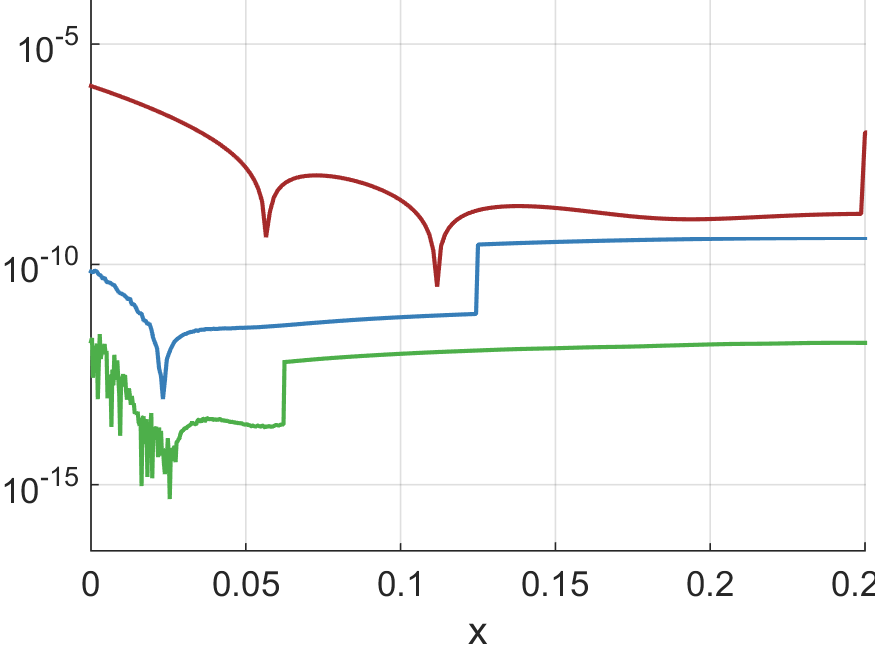}}
 &
  \subfigure[ Left zoom of \ref{fig:newM}]{
  \includegraphics[width=\figw]{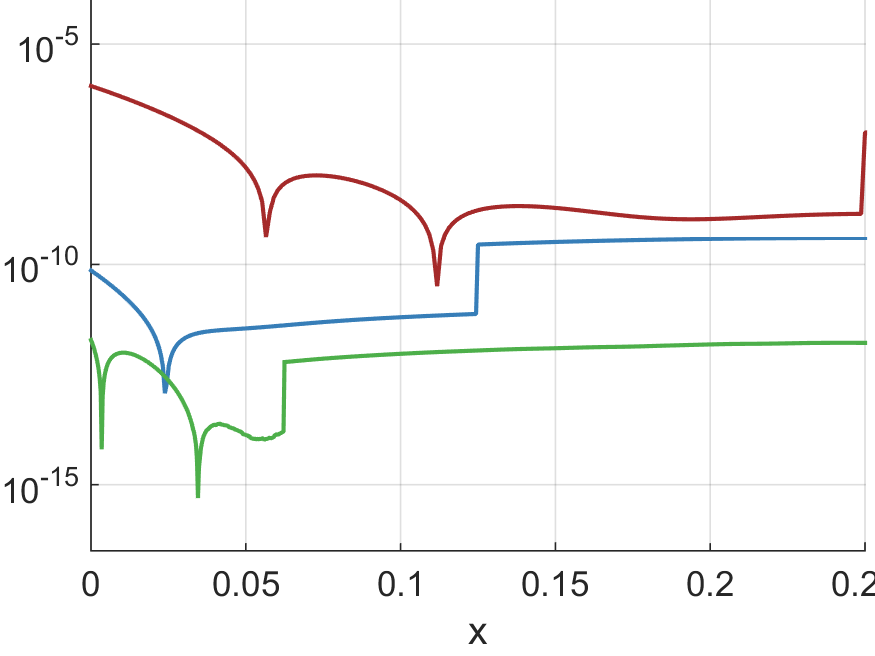}   }
  \\
  &
  \subfigure[\cite{siac2014}]{
  \includegraphics[width=\figw]{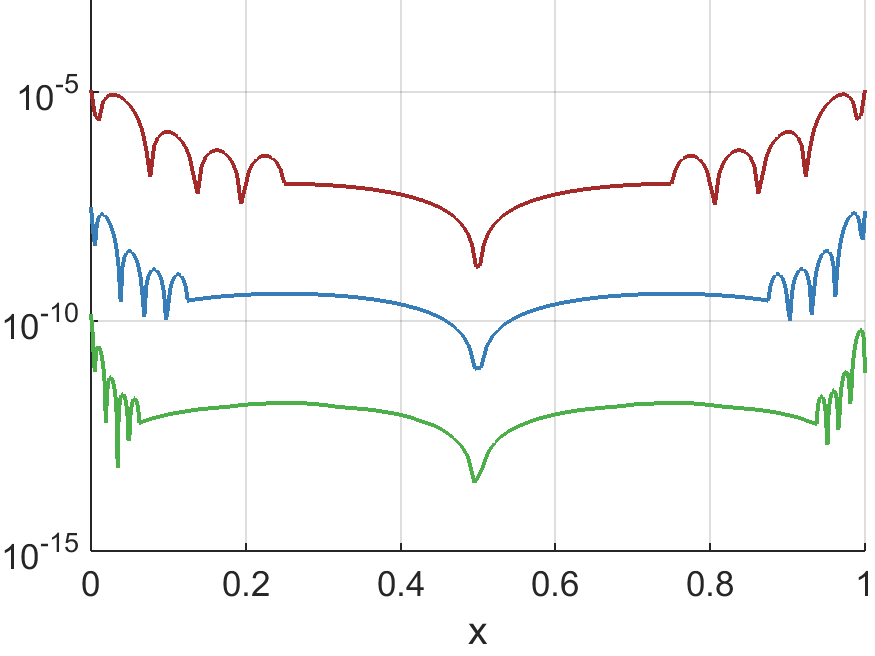} }
  &
  \subfigure[new multiple-knot filter]{
  \includegraphics[width=\figw]{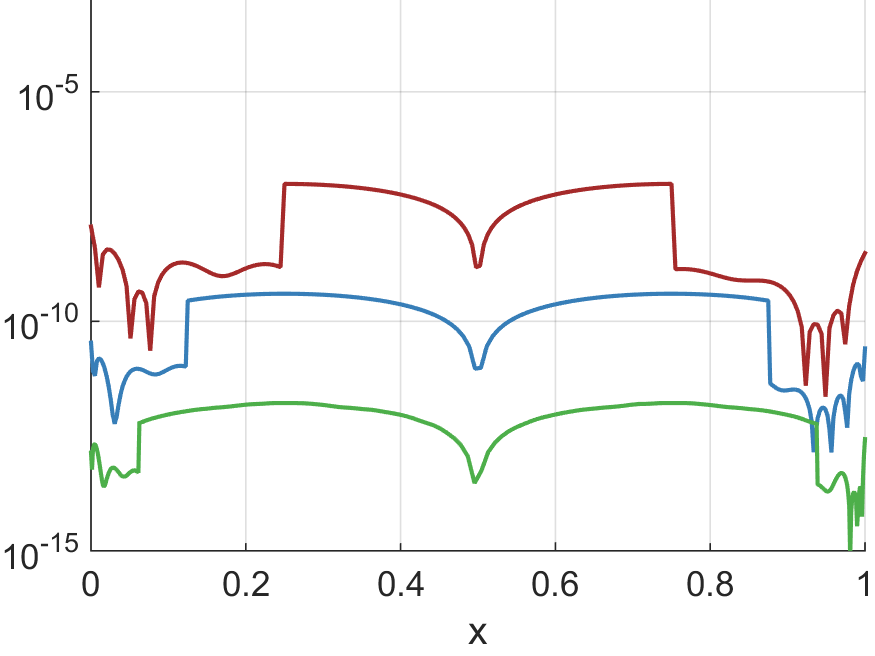}  }
\end{tabular}
\caption{Point-wise errors of \DG\ approximations of Example 2
when $\ddg=3$.
The three graphs in each subfigure correspond, from top to bottom,
(red, blue, green) to $\nkdg = 20, 40$ and $80$ \DG\ break points.
}
\label{fig:dDG3T1}
\end{figure}

We used upwind numerical flux \cite{DGbook} and solved the
resulting ordinary differential equations with the help of
a standard fourth-order
four stage explicit Runge-Kutta method (ERK) \cite[Section 3.4]{DGbook}.
Figures~\ref{fig:dDG2T1} and \ref{fig:dDG3T1} show the
error of the \DG\ approximations of Example 1 and Example 2, respectively,
when using different post-filters.
 
For \cite{siac2011}, the \oldMethod{} introduces high oscillations 
in the point-wise errors near the boundaries for both example problems.
The increase of the oscillations with decreasing mesh size 
is the result of near-singular matrices:
when $\lstDG=80$ and $\ddg=3$, the matrices $M$ have condition numbers
near the limit where MATLAB declares them singular.
However, the \newMethod{} yields exact results. 

For the RLKV \filter s we only show the result of the \newMethod.
The difference between the \oldMethod{} and the \newMethod\ is less than $10^{-13}$
confirming on one hand the stability of the RLKV filter and 
on the other hand the correctness of 
reformulation according to \thmref{thm:symFilteredDG}.
RLKV is juxtaposed with the our
degree-one multiple-knot \filter. For Example 1, displayed in
\figref{fig:dDG2T1}, the multiple-knot filter
has a slightly higher right-sided error (and a smaller left-sided one)
whereas for Example 2 \figref{fig:dDG3T1}, the multiple-knot \filter\ 
has a clearly lower error.

\section{Conclusion}
The state-of-the-art approach for computing a filtered DG output 
\cite{siac2003,siac2011,siac2012,siac2014} 
consists of,
at each evaluation point $x\in[a,b]$,
(i) assembling then inverting the reproduction matrix 
to obtain the coefficients of the position-dependent boundary kernel and
then
(ii) calculating the convolution integral by Gaussian quadrature 
to obtain the filtered \DG\ output.
Compared to that approach, the \bsiac\ filters according to
\thmref{thm:symFilteredDG} provide for a more stable, flexible, versatile 
and efficient approach:
\begin{itemize}
\item[\itmsym] {\em Stability}:
The knots $\kft_{0:\nkft}$ can be chosen freely,
e.g.\ so that the reproduction matrix $M$ is sufficiently regular. 
When the filter knots are rational, the entries of the inverse of $M$ can be 
pre-computed exactly as fractions of integers.
This avoids the need for
repeatedly inverting near-singular constraint matrices at run-time.
\item[\itmsym] {\em Flexibility}: 
The \bsiac-filtered \DG\ output is a single polynomial piece for the 
length of application of the \bsiac\ filter.
The \bsiac\ kernel can be of any degree 
and it can be defined over a sequence of knots of any multiplicity.
\item[\itmsym] {\it Versatility}: The polynomial characterization of the 
\bsiac-filtered \DG\ output directly yields, for example,
an explicit expression of its derivatives.
\item[\itmsym] {\em Efficiency}: 
Given the \proto\ knot sequence and subset of B-splines on that knot sequence
chosen to define a filter, the convolution matrix
$Q_\refx$ can be pre-computed once and for all. 
Multiplication with a simple diagonal matrix, yields the matrix
for a scaled filter knot sequence. \\
Given the vector $\ub_\Ical$ of coefficients of the \DG\ output,
the vector of polynomial coefficients $\ub_\Ical \, Q_\refx$
of the filtered output (a vector of size 
$\nc+1$) can be computed per data set, for all further convolution operations.
\\
The convolution for a point $\xx$ near the boundary 
then simplifies to a dot product of two vectors of size $\nc+1$. 
\end{itemize}
\bibliographystyle{alpha}  
\bibliography{p}
\end{document}